\theoremstyle{plain}
\newtheorem{thm}{Theorem}[section]
\newtheorem{cor}[thm]{Corollary}
\newtheorem{lem}[thm]{Lemma}
\newtheorem{prop}[thm]{Proposition}
\newtheorem{claim}[thm]{Claim}
\theoremstyle{definition}
\newtheorem{defi}[thm]{Definition}
\theoremstyle{remark}
\newtheorem{rem}[thm]{Remark}
\numberwithin{equation}{section}
\newcommand{\average}{{\mathchoice {\kern1ex\vcenter{\hrule height.4pt
width 6pt depth0pt} \kern-9.7pt} {\kern1ex\vcenter{\hrule
height.4pt width 4.3pt depth0pt} \kern-7pt} {} {} }}
\def\R{\mathbb{R}}
\begin{document}

\title[Pohozaev identities for higher order fractional Laplacians]{Local integration by parts and Pohozaev identities for higher order fractional Laplacians}

\author{Xavier Ros-Oton}

\address{Universitat Polit\`ecnica de Catalunya, Departament de Matem\`{a}tica  Aplicada I, Diagonal 647, 08028 Barcelona, Spain}
\email{xavier.ros.oton@upc.edu}

\thanks{The authors were supported by grants MTM2008-06349-C03-01, MTM2011-27739-C04-01 (Spain), and 2009SGR345 (Catalunya)}

\author{Joaquim Serra}

\address{Universitat Polit\`ecnica de Catalunya, Departament de Matem\`{a}tica  Aplicada I, Diagonal 647, 08028 Barcelona, Spain}

\email{joaquim.serra@upc.edu}

\keywords{Fractional Laplacian, Pohozaev identity, integration by parts.}

\maketitle

\begin{abstract}
We establish an integration by parts formula in bounded domains for the higher order fractional Laplacian $(-\Delta)^s$ with $s>1$.
We also obtain the Pohozaev identity for this operator.
Both identities involve local boundary terms, and they extend the identities obtained by the authors in the case $s\in(0,1)$.

As an immediate consequence of these results, we obtain a unique continuation property for the eigenfunctions $(-\Delta)^s\phi=\lambda\phi$ in $\Omega$, $\phi\equiv0$ in $\R^n\setminus\Omega$.
\end{abstract}

\section{Introduction and results}

We consider bounded solutions $u\in H^s(\R^n)$ to the Dirichlet problem
\begin{equation}\label{eq}
\left\{ \begin{array}{rcll}
(-\Delta)^s u &=&f(x,u)&\textrm{in }\ \Omega \\
u&=&0&\textrm{in }\ \R^n\backslash \Omega\end{array}\right.
\end{equation}
for the higher order fractional Laplacian $(-\Delta)^s$, $s>1$.
Here, and in the rest of the paper, $\Omega\subset\R^n$ is any bounded smooth domain, $f$ is continuous, and $(-\Delta)^s$ is defined by
\[\widehat{(-\Delta)^s u}(\xi)=|\xi|^{2s}\widehat{u}(\xi)\quad \textrm{for a.e.}\ \xi\in\R^n.\]
Equivalently, it can be defined inductively by $(-\Delta)^s=(-\Delta)^{s-1}\circ(-\Delta)$, once $(-\Delta)^s$ is defined for $s\in(0,1)$ ---see for example \cite{S,RS} for its definition for $s\in(0,1)$ in terms of an integral formula.

This higher order operator appears in PDEs \cite{KP,Tao,Y,Grubb,MYZ,Sjolin,EIK}, Geometry \cite{Martinazzi,ChangYang,GrahamZ}, Analysis \cite{CT,Samko}, and also in applied sciences \cite{MABZ,ZH}.

The aim of this paper is to establish an integration by parts formula in bounded domains $\Omega$ for the operator $(-\Delta)^s$ with $s>1$, as well as the Pohozaev identity for problem \eqref{eq}.
The results of the present paper extend the identities obtained by the authors in \cite{RS-Poh} for $s\in(0,1)$ to higher order fractional Laplacians.
They also extend the Pohozaev identities established by Pucci and Serrin \cite{PSerrin} for problem \eqref{eq} when $s=k\geq2$ is an integer.

Identities of Pohozhaev type have been widely used in the analysis of PDEs \cite{R,P,V,DS,M,PSerrin}.
These identities are used to show sharp nonexistence results, monotonicity formulas, energy estimates for ground states in $\R^n$, unique continuation properties, radial symmetry of solutions, or uniqueness results.
Moreover, they are frequently used in control theory \cite{Caz}, wave equations \cite{BO}, geometry \cite{Sc,KW,Pol}, and harmonic maps \cite{CZ}.
For example, the controllability of the linear wave equation $u_{tt}-\Delta u=0$ uses energy estimates (in terms of the boundary contribution) which follow from the Pohozaev identity.
Similar identities are used in \cite{Strauss} to the study of the decay, stability, and scattering of waves
in nonlinear media.
Also, in some critical geometric equations like $-\Delta u=Ke^{2u}$ in $\R^2$ or $-\Delta u=u^{\frac{n+2}{n-2}}$ in $\R^n$, $n\geq3$, Pohozaev identities are essential to show concentration-compactness phenomena.

In our paper \cite{RS-Poh}, we established the Pohozaev identity for the fractional Laplacian $(-\Delta)^s$ with $s\in(0,1)$.
Integro-differential equations involving this operator arise when studying stochastic processes with jumps, and they are used to model anomalous diffusions, prices of assets in financial mathematics, and other physical phenomena with long range interactions.

After the publication of our results \cite{RS-CRAS,RS-Poh}, we have been asked if these identities hold also for $s>1$.
This is because in some contexts it is natural to study the higher order operator $(-\Delta)^s$ with $s>1$.
For example, in the fractional wave equation $u_{tt}+(-\Delta)^su=0$ it is necessary to have $s\geq1$ in order to guarantee a uniform speed of propagation.
Moreover, in Geometry it is of special importance the case $s=n/2$, because it appears in the prescribed $Q$-curvature equation $(-\Delta)^{n/2}u=Ke^{nu}$.

Before stating our identities, we recall the important recent regularity results of G. Grubb \cite{Grubb}, which are essential in the proofs (and even in the statement) of our identities.
We also have to introduce the following function $d(x)$ ---it will be like $\textrm{dist}(x,\R^n\setminus\Omega)$ but smoothing out its possible singularities inside $\Omega$.

\begin{defi}\label{d}
Given a bounded smooth domain $\Omega\subset\R^n$, $d(x)$ will be any function that is positive in $\Omega$ and $C^\infty(\overline\Omega)$, that coincides with $\textrm{dist}(x,\R^n\setminus\Omega)$ in a neighborhood of $\partial\Omega$, and that $d\equiv0$ in $\R^n\setminus\Omega$.
\end{defi}

\begin{thm}[\cite{Grubb}]\label{thGrubb}
Let $\Omega\subset\R^n$ be a bounded smooth domain, $d(x)$ be as in Definition \ref{d}, and $s>0$.
Let $g\in L^\infty(\Omega)$, and let $u\in H^s(\R^n)$ be the solution to the homogeneous Dirichlet problem
\begin{equation}\label{eqlin}
\left\{ \begin{array}{rcll}
(-\Delta)^s u &=&g(x)&\textrm{in }\Omega \\
u&=&0&\textrm{in }\mathbb R^n\backslash \Omega.
\end{array}\right.
\end{equation}
Then, for all $\epsilon\in(0,1)$ and all $\alpha\in(0,+\infty)$ there exist a constant $C$ such that
\[\|u/d^s\|_{C^{s-\epsilon}(\overline\Omega)}\leq C\|g\|_{L^\infty(\Omega)}\]
and
\[\|u/d^s\|_{C^{\alpha+s-\epsilon}(\overline\Omega)}\leq C\|g\|_{C^\alpha(\overline\Omega)}.\]
Moreover,
\[g\in C^\infty(\overline\Omega)\ \Longleftrightarrow \ u/d^s\in C^\infty(\overline\Omega).\]
In particular, $(-\Delta)^sd^s$ is smooth in $\overline\Omega$.
\end{thm}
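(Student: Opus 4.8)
The statement is due to G.\ Grubb \cite{Grubb}; here is the line of argument I would follow to obtain it. The plan is to view $(-\Delta)^s$ as a classical, elliptic pseudodifferential operator ($\psi$DO) of order $2s$ on $\R^n$ and to invoke the boundary regularity theory that Grubb developed for such operators. Since for non-integer $2s$ the symbol $|\xi|^{2s}$ is not smooth at the origin, the first step is to split $(-\Delta)^s=P+R$, where $P$ has symbol $\chi(\xi)|\xi|^{2s}$ with $\chi$ a cutoff vanishing near $0$ --- a genuine classical $\psi$DO of order $2s$ --- and $R$ has a smooth, rapidly decaying kernel and hence does not affect local regularity. The essential structural input is that $P$ satisfies the \emph{$s$-transmission condition} with respect to $\partial\Omega$: the homogeneous terms in the symbol expansion have exactly the parity and homogeneity properties needed for the Boutet de Monvel--type calculus adapted to the weight $d^s$.

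Granting the transmission property, Grubb's analysis of the homogeneous Dirichlet problem \eqref{eqlin} identifies the solution space with the $s$-transmission space over $\overline\Omega$ in the relevant scale; membership in this space is precisely the assertion that $u=d^s v$ with $v$ as regular up to $\partial\Omega$ as the scale permits, and the solution map $g\mapsto v=u/d^s$ is bounded. Read in the Bessel potential and H\"older--Zygmund scales this gives: $g\in L^\infty(\Omega)$ yields $v\in C^{s-\epsilon}(\overline\Omega)$, and $g\in C^\alpha(\overline\Omega)$ yields $v\in C^{\alpha+s-\epsilon}(\overline\Omega)$, with the stated estimates, the $\epsilon$-loss being the usual one when passing from $L^\infty$/H\"older data to the sharp elliptic gain. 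The $C^\infty$ equivalence is then a bootstrap: if $g\in C^\infty(\overline\Omega)$, the second estimate applied for every $\alpha>0$ places $v=u/d^s$ in $\bigcap_{\alpha>0}C^{\alpha+s-\epsilon}(\overline\Omega)=C^\infty(\overline\Omega)$; conversely, if $u/d^s\in C^\infty(\overline\Omega)$ then, because $(-\Delta)^s$ maps $d^s\cdot C^\infty(\overline\Omega)$ into $C^\infty(\overline\Omega)$ (again by the transmission property, used now in the ``forward'' direction), $g=(-\Delta)^s u\in C^\infty(\overline\Omega)$.

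The smoothness of $(-\Delta)^s d^s$ on $\overline\Omega$ follows by specialization: $d^s\in H^s(\R^n)$, it vanishes on $\R^n\setminus\Omega$, and (as part of the same calculus) $g:=(-\Delta)^s d^s\in L^\infty(\Omega)$, so $d^s$ is the solution of \eqref{eqlin} with this $g$; since here $u/d^s\equiv 1\in C^\infty(\overline\Omega)$, the equivalence above, in the direction $u/d^s\in C^\infty\Rightarrow g\in C^\infty$, gives $(-\Delta)^s d^s\in C^\infty(\overline\Omega)$.

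The genuinely hard part is the analytic machinery underlying the second paragraph --- verifying the $s$-transmission condition for $(-\Delta)^s$ and constructing the corresponding parametrix and weighted function-space theory --- which is exactly the content of \cite{Grubb}; once those mapping properties are in hand, the H\"older bootstrap and the specialization to $d^s$ are routine. A secondary technical point is the careful handling of the smoothing remainder $R$ and of the non-smoothness of $|\xi|^{2s}$ at the origin, but this is standard and does not influence boundary regularity.
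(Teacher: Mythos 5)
The paper does not prove this statement: Theorem~\ref{thGrubb} is quoted from Grubb's work \cite{Grubb} and used as a black box throughout, so there is no ``paper's own proof'' to compare against. Your sketch is a faithful high-level account of the approach in the cited reference --- split $(-\Delta)^s$ into a classical elliptic $\psi$DO of order $2s$ plus a smoothing remainder, verify the $\mu$-transmission condition with $\mu=s$, identify the homogeneous Dirichlet solution space with the weighted $s$-transmission space (so that $u=d^s v$ with $v$ regular up to the boundary), and read off the H\"older mapping properties and the $C^\infty$ bootstrap. One minor streamlining worth noting: the smoothness of $(-\Delta)^s d^s$ on $\overline\Omega$ follows directly from the ``forward'' transmission property applied to $d^s\cdot 1$ with $1\in C^\infty(\overline\Omega)$; there is no need to first establish $(-\Delta)^s d^s\in L^\infty(\Omega)$ and then run the equivalence.
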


The results of Grubb apply also to nonhomogeneous Dirichlet problems and to more general pseudodifferential operators and functional spaces; see \cite{Grubb}.

\begin{rem}\label{rem1}
Notice that we are considering weak solutions $u\in H^s(\R^n)$ to
\begin{equation}\label{eqlinear}
\left\{ \begin{array}{rcll}
(-\Delta)^s u &=&g(x)&\textrm{in }\ \Omega \\
u&=&0&\textrm{in }\ \R^n\backslash \Omega.
\end{array}\right.\end{equation}
The unique solution $u\in H^s(\R^n)$ to this problem can be obtained by minimizing the energy functional
\[\mathcal E(u)=\frac12\|u\|_{\accentset{\circ}{H}^s(\R^n)}^2-\int_\Omega gu=\frac12\int_{\R^n}|(-\Delta)^{s/2}u|^2-\int_\Omega gu\]
among all functions $u\in H^s(\R^n)$ satisfying $u\equiv0$ in $\R^n\setminus\Omega$.

It is important to notice that the condition $u\in H^s(\R^n)$ is necessary in order to have uniqueness of solutions ---see Remark \ref{rem-bis} at the end of the Introduction.
\end{rem}

Our first result is the following Pohozaev type identity.

\begin{thm}\label{intparts}
Let $\Omega$ be a bounded smooth domain, $d(x)$ be as in Definition \ref{d}, and $s>1$.
Let $u\in H^s(\R^n)$ be such that $u\equiv0$ in $\R^n\setminus\Omega$ and that $(-\Delta)^s u$ belongs to $L^\infty(\Omega)$.

Then, $u/d^s|_{\Omega}$ has a continuous extension to $\overline\Omega$, and the following identity holds
\begin{equation}\label{pohozaev-id}
\int_\Omega(x\cdot\nabla u)(-\Delta)^su\ dx=\frac{2s-n}{2}\int_{\Omega}u(-\Delta)^su\ dx-\frac{\Gamma(1+s)^2}{2}\int_{\partial\Omega}\left(\frac{u}{d^s}\right)^2(x\cdot\nu)d\sigma.
\end{equation}
Here, $\nu$ is the unit outward normal to $\partial\Omega$ at $x$ and $\Gamma$ is the Gamma function.
\end{thm}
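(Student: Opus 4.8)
The natural strategy is to reduce the higher‑order case $s>1$ to the already known case $s\in(0,1)$ from \cite{RS-Poh}, exploiting the factorization $(-\Delta)^s=(-\Delta)^{s-1}\circ(-\Delta)$, or more symmetrically $(-\Delta)^s=(-\Delta)^{s/2}\circ(-\Delta)^{s/2}$ together with repeated application of the classical integer‑order Pohozaev identities of Pucci–Serrin \cite{PSerrin}. Write $s=m+\sigma$ with $m=\lfloor s\rfloor\ge1$ an integer and $\sigma\in(0,1]$ (the borderline $\sigma=1$, i.e.\ $s$ itself an integer, is handled directly by \cite{PSerrin}, so assume $\sigma\in(0,1)$). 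The plan is to set $v=(-\Delta)^m u$, so that $(-\Delta)^\sigma v=(-\Delta)^s u=:g\in L^\infty(\Omega)$, but be careful: $v$ does \emph{not} vanish outside $\Omega$, so the fractional Pohozaev identity for $\sigma\in(0,1)$ cannot be applied to $v$ directly. Instead I would proceed in two conceptually separate pieces and then glue the boundary terms.

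\emph{Step 1: smoothness and the $u/d^s$ extension.} First, Theorem \ref{thGrubb} applied with this $s>1$ and $g=(-\Delta)^s u\in L^\infty(\Omega)$ gives immediately that $u/d^s$ has a $C^{s-\epsilon}(\overline\Omega)$, hence continuous, extension to $\overline\Omega$; this disposes of the first assertion. By the density/regularity part of Grubb's theorem and the bilinearity of all the terms in \eqref{pohozaev-id}, it suffices to prove the identity for $g\in C^\infty(\overline\Omega)$, in which case $u/d^s\in C^\infty(\overline\Omega)$; a limiting argument (controlled by the $L^\infty$ bounds of Theorem \ref{thGrubb}) then recovers the general $L^\infty$ case, since $\int_\Omega u(-\Delta)^su=\|(-\Delta)^{s/2}u\|_{L^2}^2$ and $\int_\Omega(x\cdot\nabla u)(-\Delta)^su$ both depend continuously on $g$ in the relevant topologies, as does $\int_{\partial\Omega}(u/d^s)^2(x\cdot\nu)$.

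\emph{Step 2: the integration by parts / rescaling argument.} With $u$ smooth up to the boundary after division by $d^s$, I would mimic the rescaling proof of \cite{RS-Poh}: for $\lambda>1$ set $u_\lambda(x)=u(\lambda x)$ and compute $\frac{d}{d\lambda}\big|_{\lambda=1^+}$ of $\int_{\R^n}(-\Delta)^{s/2}u_\lambda\,(-\Delta)^{s/2}u\,dx$ in two ways. Differentiating under the (scale‑invariant up to an explicit power of $\lambda$) fractional energy and using $(-\Delta)^{s/2}(u(\lambda\cdot))(x)=\lambda^s\big((-\Delta)^{s/2}u\big)(\lambda x)$ produces the bulk terms $\frac{2s-n}{2}\int u(-\Delta)^su$ and $\int(x\cdot\nabla u)(-\Delta)^su$; the subtlety, exactly as in the $s\in(0,1)$ case, is that $w:=(-\Delta)^{s/2}u$ behaves near $\partial\Omega$ like $d^{s/2}$ from inside and has a one‑sided expansion, so the quantity $\int_{\R^n}w(x)\,w(\lambda x)\,dx$ is \emph{not} differentiable at $\lambda=1$ in a naive sense — the two-sided difference quotient has a jump whose size is a boundary integral of $(w/d^{s/2})^2$, and $w/d^{s/2}$ on $\partial\Omega$ is (up to the constant $\Gamma(1+s)$, coming from $(-\Delta)^{s/2}d^{s}\sim\Gamma(1+s)\Gamma(1-s/2)^{-1}d^{s/2}$‑type identities) exactly $u/d^s$. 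Matching the jump term against the boundary contribution in the other way of differentiating yields the constant $\Gamma(1+s)^2/2$ and the boundary integral in \eqref{pohozaev-id}. Alternatively — and this is the route I would actually write out — I would first prove a pure integration‑by‑parts formula $\int_\Omega u_{x_i}(-\Delta)^s u\,dx$ in terms of a boundary integral (the companion "first result" the paper promises), and then contract with $x_i$ and sum; this isolates the hard boundary analysis into a single lemma.

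\emph{Main obstacle.} The crux is the boundary term: one must show that the fractional energy pairing $\int_{\R^n} w(x)w(\lambda x)\,dx$, with $w=(-\Delta)^{s/2}u\asymp d^{s/2}$ near $\partial\Omega$, has a one‑sided derivative at $\lambda=1$ with a computable jump, and to compute the precise constant $\Gamma(1+s)^2/2$ relating $w/d^{s/2}|_{\partial\Omega}$ to $u/d^s|_{\partial\Omega}$. For $s>1$ this needs the sharp boundary expansions from Grubb's theorem (Theorem \ref{thGrubb}) — not merely Hölder regularity of $u/d^s$ but the analogous statement for $(-\Delta)^{s/2}u$, i.e.\ that $(-\Delta)^{s/2}u=d^{s/2}\,h$ with $h\in C^{s/2-\epsilon}(\overline\Omega)$ and an explicit relation $h|_{\partial\Omega}=\Gamma(1+s)\,(u/d^s)|_{\partial\Omega}$ up to a universal constant. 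Establishing this factorization for $(-\Delta)^{s/2}u$ (where $u$ solves the $s$‑order problem, not the $(s/2)$‑order problem) is the technical heart; once it is in hand, the rest is the same bookkeeping as in \cite{RS-Poh}.
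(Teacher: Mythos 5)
Your overall skeleton (reduce to $g\in C^\infty(\overline\Omega)$ by Grubb's theorem, then run the dilation argument $\frac{d}{d\lambda}\big|_{\lambda=1^+}\int_{\R^n}w_\lambda w_{1/\lambda}$ with $w=(-\Delta)^{s/2}u$, then recover the boundary term from the one-sided derivative) is indeed what the paper does, and you are right that the crux is the boundary behavior of $w$. However, you have the wrong singularity type for $w$, and this is not a small slip — it is precisely the technical heart you flagged. You assert that $w=(-\Delta)^{s/2}u\asymp d^{s/2}$ near $\partial\Omega$ with a Hölder factor $h=w/d^{s/2}$, and you want a relation $h|_{\partial\Omega}=c\,(u/d^s)|_{\partial\Omega}$. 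This would be the boundary behavior of the \emph{solution} of the $(s/2)$-order Dirichlet problem, but $(-\Delta)^{s/2}u$ does \emph{not} solve such a problem: it does not vanish in $\R^n\setminus\Omega$. The correct statement (Proposition~\ref{thlaps/2}) is that $(-\Delta)^{s/2}u$ is \emph{unbounded}, with a logarithmic blow-up plus a jump across $\partial\Omega$, namely $(-\Delta)^{s/2}u=c_1\{\log^-\delta+c_2\chi_\Omega\}\,v+h$ with $v$ a smooth extension of $u/d^s$ and $h\in C^\alpha(\R^n)$. It is exactly this $\log$-singularity, not a clean power $d^{s/2}$, that makes $\frac{d}{d\lambda}\big|_{1^+}\int w_\lambda w_{1/\lambda}$ nonzero; the one-variable computation feeding it is Proposition~1.11 of \cite{RS-Poh}, which evaluates $-\frac{d}{d\lambda}\big|_{1^+}\int_0^\infty\varphi(\lambda t)\varphi(t/\lambda)\,dt=A^2\pi^2+B^2$ for $\varphi=A\log^-|t-1|+B\chi_{[0,1]}+h$. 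If you attempt the argument with a $(t-1)_+^{s/2}$-type profile instead, the derivative does not produce the boundary integral at all, and no relation of the form $w/d^{s/2}|_{\partial\Omega}=c\,(u/d^s)|_{\partial\Omega}$ holds. So the ``factorization'' you propose to establish would be false, and the constant-matching step built on it collapses.

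Two smaller remarks. First, you correctly observed that the naive reduction $v=(-\Delta)^mu$, $(-\Delta)^\sigma v=g$, fails because $v\not\equiv0$ outside $\Omega$; the paper does not use this reduction at the top level, although integer decompositions $s/2=k+s_0$ do appear inside the auxiliary lemmas (Lemma~\ref{s>2}, Lemma~\ref{claim1}, Proposition~\ref{proplaps2}), where they are harmless because one is computing explicit expansions of $(-\Delta)^{s/2}d^s$, not solving Dirichlet problems. Second, your alternative route — prove the directional integration-by-parts formula for $u_{x_i}$ first and contract with $x_i$ — inverts the logical order in the paper: there, Theorem~\ref{corintparts} is \emph{derived} from the Pohozaev identity (via a bilinear version proved first on strictly star-shaped pieces and then patched together for general smooth domains, Section~\ref{sec4}); the dilation argument genuinely needs star-shapedness to pass to spherical coordinates, so the patching step is not optional and your plan does not account for it.
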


Before our work \cite{RS-Poh} for $s\in(0,1)$, no integration by parts type formula for the fractional Laplacian involving a local boundary term as in \eqref{pohozaev-id} was known.
In particular, there was not even a candidate for a Pohozaev identity in bounded domains.
We found and succeeded to establish this identity for $s\in(0,1)$ in \cite{RS-Poh} by introducing a new method.
Here, we adapt it to the case $s>1$.
To our knowledge, Theorem \ref{intparts} is new even in dimension $n=1$.

For the Laplacian $-\Delta$, the Pohozaev identity follows easily from integration by parts or the divergence theorem.
However, in this nonlocal framework these tools are not available, and our proof is more involved.
In this direction, the constant $\Gamma(1+s)^2$ in \eqref{pohozaev-id} seems to indicate that, in contrast with the case $s=1$, it can not follow immediately from basic vector calculus identities.

As a consequence of Theorem \ref{intparts}, we also obtain a new integration by parts formula in bounded domains.
It reads as follows.

\begin{thm}\label{corintparts}
Let $\Omega$ be a bounded smooth domain, $d(x)$ be as in Definition \ref{d}, and $s>1$.
Let $u$ and $v$ be $H^s(\R^n)$ functions satisfying that $u\equiv v\equiv0$ in $\R^n\setminus\Omega$ and that $(-\Delta)^s u$ and $(-\Delta)^sv$ belong to $L^\infty(\Omega)$.

Then, $u/d^s|_{\Omega}$ and $v/d^s|_{\Omega}$ have continuous extensions to $\overline\Omega$, and the following identity holds
\[\int_\Omega (-\Delta)^su\ v_{x_i}\,dx=-\int_\Omega u_{x_i}(-\Delta)^sv\,dx-\Gamma(1+s)^2\int_{\partial\Omega}\frac{u}{d^{s}}\frac{v}{d^{s}}\,\nu_i\,d\sigma\]
for $i=1,...,n$, where $\nu$ is the unit outward normal to $\partial\Omega$ at $x$, and $\Gamma$ is the Gamma function.
\end{thm}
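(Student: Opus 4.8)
The plan is to derive Theorem \ref{corintparts} from the Pohozaev identity of Theorem \ref{intparts} by a polarization-and-translation argument, exactly as one does for $s\in(0,1)$ in \cite{RS-Poh}. First I would note that Theorem \ref{intparts} can be applied not only with the dilation vector field $x\cdot\nabla u$ but, after translating the domain, with $(x-x_0)\cdot\nabla u$ for any fixed $x_0\in\R^n$; since the left-hand side of \eqref{pohozaev-id} is affine in $x_0$ while the bulk term $\frac{2s-n}{2}\int u(-\Delta)^su$ is independent of $x_0$, subtracting the identity at $x_0$ from the identity at $0$ isolates a \emph{linear} functional of the direction vector. Concretely, differentiating in $x_0$ (or comparing the coefficients of each coordinate $x_{0,i}$) yields
\[
\int_\Omega u_{x_i}\,(-\Delta)^su\,dx=-\frac{\Gamma(1+s)^2}{2}\int_{\partial\Omega}\left(\frac{u}{d^s}\right)^2\nu_i\,d\sigma,\qquad i=1,\dots,n,
\]
which is precisely Theorem \ref{corintparts} in the diagonal case $u=v$. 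The continuous extension of $u/d^s$ to $\overline\Omega$ is already granted by Theorem \ref{intparts}, so this step needs only that the boundary integrals on the right of \eqref{pohozaev-id} depend on $x_0$ only through the explicit factor $(x-x_0)\cdot\nu$, which is immediate.

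Next I would polarize. Apply the diagonal identity to $u+v$, $u-v$ (both of which lie in $H^s(\R^n)$, vanish outside $\Omega$, and have $(-\Delta)^s(u\pm v)\in L^\infty(\Omega)$), and subtract:
\[
\int_\Omega\big[(u+v)_{x_i}(-\Delta)^s(u+v)-(u-v)_{x_i}(-\Delta)^s(u-v)\big]\,dx
=-\frac{\Gamma(1+s)^2}{2}\int_{\partial\Omega}\!\left[\Big(\tfrac{u+v}{d^s}\Big)^2-\Big(\tfrac{u-v}{d^s}\Big)^2\right]\nu_i\,d\sigma.
\]
The left side equals $2\int_\Omega\big[u_{x_i}(-\Delta)^sv+v_{x_i}(-\Delta)^su\big]\,dx$ and the right side equals $-2\Gamma(1+s)^2\int_{\partial\Omega}\frac{u}{d^s}\frac{v}{d^s}\,\nu_i\,d\sigma$, so dividing by $2$ gives the \emph{symmetrized} integration by parts formula
\[
\int_\Omega\big[u_{x_i}(-\Delta)^sv+v_{x_i}(-\Delta)^su\big]\,dx=-\Gamma(1+s)^2\int_{\partial\Omega}\frac{u}{d^s}\frac{v}{d^s}\,\nu_i\,d\sigma.
\]
This is a weak form of the assertion; to get the one-sided statement $\int_\Omega(-\Delta)^su\,v_{x_i}=-\int_\Omega u_{x_i}(-\Delta)^sv-\Gamma(1+s)^2\int_{\partial\Omega}\frac{u}{d^s}\frac{v}{d^s}\nu_i$, I only need to rewrite $\int_\Omega v_{x_i}(-\Delta)^su=\int_\Omega(-\Delta)^su\,v_{x_i}$ and move the $u_{x_i}(-\Delta)^sv$ term to the right; the symmetrized formula \emph{is} the one-sided formula after this trivial rearrangement. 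Hence Theorem \ref{corintparts} follows.

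The routine technical points I would check along the way are: (i) that the translated domain $\Omega-x_0$ is again a bounded smooth domain and that $d(\,\cdot\,+x_0)$ is an admissible choice of $d$ for it, so Theorem \ref{intparts} genuinely applies; (ii) that $u\pm v$ satisfy all the hypotheses of the diagonal identity, which is clear by linearity of $(-\Delta)^s$ and of the trace $u/d^s$; and (iii) that the $x_0$-derivative is legitimate — here it is cleanest to avoid differentiation altogether and simply write out \eqref{pohozaev-id} for the vector field $(x-x_0)\cdot\nabla u$, subtract the $x_0=0$ version, and read off coefficients, since $(x-x_0)\cdot\nu$ is affine in $x_0$ and all other terms are $x_0$-independent or cancel. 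I do not expect a serious obstacle in this argument: all of the analytic difficulty — the continuity of $u/d^s$ up to $\partial\Omega$ and the precise constant $\Gamma(1+s)^2$ — has already been absorbed into Theorem \ref{intparts}, and what remains is the same elementary polarization/translation bookkeeping that works for $s\in(0,1)$. The only place demanding a little care is making sure the translation step does not secretly change the boundary term in a $d$-dependent way; this is handled by the observation in Definition \ref{d} that $d$ is canonical near $\partial\Omega$ (it equals $\mathrm{dist}(\cdot,\R^n\setminus\Omega)$ there), so $u/d^s|_{\partial\Omega}$ is unambiguous.
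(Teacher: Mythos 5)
Your argument is correct and is essentially the same as the paper's (which simply defers to the corresponding argument in \cite{RS-Poh}): translate the Pohozaev identity to an arbitrary origin $x_0$, subtract to cancel the bulk term and the dilation term in $x$, read off the coefficients of $x_{0,i}$ to get the diagonal identity $\int_\Omega u_{x_i}(-\Delta)^su=-\tfrac{\Gamma(1+s)^2}{2}\int_{\partial\Omega}(u/d^s)^2\nu_i$, and then polarize with $u\pm v$. The only cosmetic difference is that the paper sets up the bilinear (already-polarized) form of the Pohozaev identity \eqref{bilinear} first and then translates, whereas you translate first and polarize afterward; both orderings are equivalent, and your observation that $u/d^s|_{\partial\Omega}$ is intrinsic to $\Omega$ (so translating does not alter the boundary term) correctly disposes of the one point that needs care.
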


(In Theorem \ref{corintparts}, we have corrected the sign on the boundary contribution, which was incorrectly stated in the previous version of the paper.)

\begin{rem}\label{remGrubb}
In personal communication, G. Grubb explained to us how the Fourier methods in her paper \cite{Grubb}, at least in the case of a flat piece of the boundary, can be used to show Theorem \ref{corintparts}.
It would be very interesting to understand better this relation, and to see what kind of Pohozaev-type identities can be obtained via Fourier transform methods.
\end{rem}

Applying Theorem \ref{intparts} to solutions of semilinear problems of the form \eqref{eq} we find the Pohozaev identity for the higher order fractional Laplacian.
We state next this result which, for the sake of simplicity, we state for solutions to \eqref{eq2} below instead of \eqref{eq}.

When $s=k\geq2$ is an integer, the following identity was obtained by Pucci and Serrin \cite{PSerrin}.

\begin{cor}\label{thpoh}
Let $\Omega\subset\R^n$ be any bounded smooth domain, $f$ be a continuous nonlinearity, and $s>1$.
Let $u\in H^s(\R^n)$ be any bounded solution of
\begin{equation}\label{eq2}
\left\{ \begin{array}{rcll}
(-\Delta)^s u &=&f(u)&\textrm{in }\Omega \\
u&=&0&\textrm{in }\mathbb R^n\backslash \Omega,\end{array}\right.
\end{equation}
and let $d$ be as in Definition \ref{d}.
Then, $u/d^s|_{\Omega}$ has a continuous extension to $\overline\Omega$, and the following identity holds
\[(2s-n)\int_{\Omega}uf(u)dx+2n\int_\Omega F(u)dx=\Gamma(1+s)^2\int_{\partial\Omega}\left(\frac{u}{d^s}\right)^2(x\cdot\nu)d\sigma,\]
where $F(t)=\int_0^tf$, $\nu$ is the unit outward normal to $\partial\Omega$ at $x$, and $\Gamma$ is the Gamma function.
\end{cor}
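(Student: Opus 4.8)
The plan is to derive Corollary~\ref{thpoh} directly from the Pohozaev identity of Theorem~\ref{intparts}, by specializing it to a solution $u$ of \eqref{eq2} and rewriting the left-hand side of \eqref{pohozaev-id} by means of a classical integration by parts.

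First I would check that Theorem~\ref{intparts} applies. Since $u\in H^s(\R^n)$ is a bounded solution of \eqref{eq2} with $f$ continuous, we have $(-\Delta)^s u=f(u)\in L^\infty(\Omega)$ and $u\equiv0$ in $\R^n\setminus\Omega$. Hence the theorem gives that $u/d^s|_\Omega$ extends continuously to $\overline\Omega$ (which is the first assertion of the corollary) and that
\[\int_\Omega (x\cdot\nabla u)\,f(u)\,dx=\frac{2s-n}{2}\int_\Omega u\,f(u)\,dx-\frac{\Gamma(1+s)^2}{2}\int_{\partial\Omega}\Big(\frac{u}{d^s}\Big)^2(x\cdot\nu)\,d\sigma.\]

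Next I would transform the left-hand side. Setting $F(t)=\int_0^t f$, so that $F\in C^1$, we have $\nabla\big(F(u)\big)=f(u)\,\nabla u$ in $\Omega$, hence $(x\cdot\nabla u)\,f(u)=x\cdot\nabla\big(F(u)\big)$. By Theorem~\ref{thGrubb} with $g=f(u)$ one has $u/d^s\in C^{s-\epsilon}(\overline\Omega)$ for all small $\epsilon>0$; choosing $\epsilon<s-1$, which is possible since $s>1$, gives $s-\epsilon>1$ and thus $u/d^s\in C^1(\overline\Omega)$. As $d^s\in C^1(\overline\Omega)$ vanishes on $\partial\Omega$, the function $u=d^s\,(u/d^s)$ belongs to $C^1(\overline\Omega)$ and vanishes on $\partial\Omega$. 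Therefore $F(u)\in W^{1,\infty}(\Omega)\cap C(\overline\Omega)$ with $F(u)=F(0)=0$ on $\partial\Omega$, and the divergence theorem yields
\[\int_\Omega (x\cdot\nabla u)\,f(u)\,dx=\int_\Omega x\cdot\nabla\big(F(u)\big)\,dx=\int_{\partial\Omega}(x\cdot\nu)\,F(u)\,d\sigma-n\int_\Omega F(u)\,dx=-n\int_\Omega F(u)\,dx.\]
Substituting this into the previous display, multiplying by $-2$, and rearranging terms gives precisely
\[(2s-n)\int_\Omega u\,f(u)\,dx+2n\int_\Omega F(u)\,dx=\Gamma(1+s)^2\int_{\partial\Omega}\Big(\frac{u}{d^s}\Big)^2(x\cdot\nu)\,d\sigma.\]

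The only genuine difficulty lies in the regularity needed to justify the classical integration by parts above and the vanishing of its boundary term: a priori $u$ is merely an $H^s$ function, so one has to know that $u$, and hence $F(u)$, is regular enough up to $\partial\Omega$. This is exactly where Grubb's estimates (Theorem~\ref{thGrubb}) enter: they provide $u/d^s\in C^{s-\epsilon}(\overline\Omega)$, which for $s>1$ upgrades to $u\in C^1(\overline\Omega)$ with $u|_{\partial\Omega}=0$, so that $\int_{\partial\Omega}(x\cdot\nu)\,F(u)\,d\sigma=\int_{\partial\Omega}(x\cdot\nu)\,F(0)\,d\sigma=0$. Everything else is elementary algebra.
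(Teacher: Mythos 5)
Your proof is correct and follows the same standard route the paper takes (and, being judged routine, the paper defers to the argument of \cite{RS-Poh}): apply Theorem~\ref{intparts} with $(-\Delta)^su=f(u)\in L^\infty(\Omega)$, rewrite $(x\cdot\nabla u)f(u)=x\cdot\nabla F(u)$, integrate by parts, use $u|_{\partial\Omega}=0$ (hence $F(u)|_{\partial\Omega}=0$) to drop the boundary term, and rearrange. Your justification of the needed regularity via Theorem~\ref{thGrubb}, giving $u\in C^1(\overline\Omega)$, is exactly the right way to make the classical divergence theorem step rigorous, so there is nothing to add.
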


In case that $f(u)$ is supercritical (e.g. $f(u)=|u|^{p-1}u$ with $n>2s$ and $p>\frac{n+2s}{n-2s}$), this identity yields the nonexistence of bounded solutions $u\in H^s(\R^n)$ to problem \eqref{eq2}; see \cite{RS-Poh,RS-nonexistence}.
When $s\leq 1$, the nonexistence of bounded \emph{positive} solutions to the critical problem with $f(u)=u^{\frac{n+2s}{n-2s}}$ can be also deduced from this identity by using the Hopf Lemma ---which yields that $u/d^s>0$ on $\partial\Omega$.
However, when $s>1$ the maximum principle does not hold, and thus no Hopf Lemma holds for \eqref{eq2}.
Therefore, the nonexistence of solutions for the critical problem in case $s>1$ is related to a general unique continuation property of the form $u/d^s\equiv0$ on $\partial\Omega$ $\Longrightarrow$ $u\equiv0$ in $\Omega$.
This is not known even for the case $s\in(0,1)$.

In case that $f(u)$ is subcritical, it turns out that this unique continuation property follows from Corollary \ref{thpoh}.
Because of its special importance, we state this result for the case of the eigenfunctions, i.e., for $f(u)=\lambda u$.

\begin{cor}\label{cor-uniquecont}
Let $\Omega\subset \R^n$ be a bounded smooth domain.
Let $s>1$, and let $\phi\in H^s(\R^n)$ be any solution of
\[\left\{ \begin{array}{rcll}
(-\Delta)^s \phi &=&\lambda\phi&\textrm{in }\Omega \\
\phi&=&0&\textrm{in }\mathbb R^n\backslash \Omega.\end{array}\right.\]
Then,
\[\left.\frac{\phi}{d^s}\right|_{\partial\Omega}\equiv0\quad \textrm{on}\quad \partial\Omega\qquad \Longrightarrow\qquad \phi\equiv0\quad \textrm{in}\quad \Omega.\]
Here, $\phi/d^s$ on $\partial\Omega$ has to be understood as a limit (as in Theorem \ref{intparts}).
\end{cor}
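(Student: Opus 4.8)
The plan is to deduce the result from the Pohozaev identity of Corollary \ref{thpoh} applied to the linear nonlinearity $f(u)=\lambda u$. Before doing so one must check that $\phi$ is bounded, so that the hypotheses of Corollary \ref{thpoh} (and of Theorem \ref{intparts}) are met: since $\phi\in H^s(\R^n)$ it lies in $L^q(\R^n)$ for some $q>2$ by the Sobolev embedding, hence $(-\Delta)^s\phi=\lambda\phi\in L^q(\Omega)$, and the $L^p$ regularity estimates of \cite{Grubb} (see also Theorem \ref{thGrubb}) let one bootstrap the integrability of $\phi$ and reach $\phi\in L^\infty(\Omega)$ after finitely many steps. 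In particular $(-\Delta)^s\phi\in L^\infty(\Omega)$, so $\phi/d^s|_\Omega$ has a continuous extension to $\overline\Omega$ and $\phi$ is a bounded solution of \eqref{eq2} with $f(u)=\lambda u$.

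Now $F(t)=\int_0^t f=\tfrac\lambda2 t^2$, so Corollary \ref{thpoh} reads
\begin{equation*}
(2s-n)\lambda\int_\Omega \phi^2\,dx+2n\,\frac\lambda2\int_\Omega \phi^2\,dx=\Gamma(1+s)^2\int_{\partial\Omega}\left(\frac{\phi}{d^s}\right)^2(x\cdot\nu)\,d\sigma ,
\end{equation*}
whose left-hand side equals $2s\lambda\int_\Omega\phi^2\,dx$. Assuming $\phi/d^s\equiv0$ on $\partial\Omega$, the integrand on the right-hand side vanishes identically on $\partial\Omega$ (whatever the sign of $x\cdot\nu$, so no star-shapedness of $\Omega$ is needed), and we obtain $2s\lambda\int_\Omega\phi^2\,dx=0$. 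If $\lambda\neq0$ this gives $\int_\Omega\phi^2=0$, hence $\phi\equiv0$ in $\Omega$ (recall $\phi$ is continuous in $\Omega$), which is the claim.

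It remains to dispose of the degenerate case $\lambda=0$, which is immediate and does not even use the boundary hypothesis: testing the weak formulation of $(-\Delta)^s\phi=0$ in $\Omega$, $\phi\equiv0$ in $\R^n\setminus\Omega$, against $\phi$ itself — an admissible test function since $\phi\in H^s(\R^n)$ vanishes outside $\Omega$ — yields $\int_{\R^n}|(-\Delta)^{s/2}\phi|^2\,dx=0$, i.e.\ $|\xi|^s\widehat\phi(\xi)=0$ for a.e.\ $\xi$, so $\widehat\phi\equiv0$ and $\phi\equiv0$.

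In truth there is no real obstacle left at this stage: the whole weight of the statement is carried by Theorem \ref{intparts} and Corollary \ref{thpoh}, i.e.\ ultimately by the fact that $u/d^s$ extends continuously up to $\partial\Omega$ (Grubb's regularity theory) and by the local boundary term in the Pohozaev identity. The only points that genuinely require a line of justification are the bootstrap ensuring $\phi\in L^\infty(\Omega)$ and the separate, trivial handling of $\lambda=0$.
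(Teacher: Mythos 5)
Your proof is correct and follows essentially the same route as the paper: bootstrap to $\phi\in L^\infty(\Omega)$ via Grubb's regularity theory, then plug $f(u)=\lambda u$ into the Pohozaev identity of Corollary~\ref{thpoh} and use that the boundary term vanishes. You are in fact a bit more careful than the paper's own proof, which writes $2s\int_\Omega\phi^2\,dx=0$ and silently drops the factor of $\lambda$ (the identity actually gives $2s\lambda\int_\Omega\phi^2\,dx=0$), so that the case $\lambda=0$ genuinely needs the separate energy argument you supply; your observation that no star-shapedness of $\Omega$ is used, since the boundary integrand vanishes pointwise, is also correct.
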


Notice that this Corollary is stated here for $s>1$, but the same result holds also for $s\in(0,1)$ ---in that case one must use our results in \cite{RS-Poh} instead of Corollary \ref{thpoh}.

For elliptic operators of second order, this type of unique continuation property is an essential ingredient in the proof of the generic simplicity of the spectrum \cite{U} which, in turn, arises when analyzing controllability and stabilization issues for evolution problems \cite{L}.

To prove the generic simplicity of the full spectrum one needs to compute a shape derivative.
It is in this computation where the boundary term $u/d^s|_{\partial\Omega}$ arises naturally.
See for example \cite{OZ}, where this was done for the bilaplacian $\Delta^2$ ---which corresponds to $s=2$ in our result.
For the fractional Laplacian $(-\Delta)^s$, this shape derivative has been computed for $s=1/2$ and $n=2$ in \cite{DG}.

\begin{rem}\label{rem-bis}
As said in Remark \ref{rem1}, problem \eqref{eqlinear} admits a unique $H^s(\R^n)$ solution, but when $s>1$ there is no uniqueness of bounded solutions for the problem.
Indeed, for all $s>0$ the following nonhomogeneous Dirichlet problem \cite{Grubb,A} is well posed (in the appropriate energy space):
\begin{equation}\label{eqlinear2}
\left\{ \begin{array}{rcll}
(-\Delta)^s u &=&g(x)&\textrm{in }\ \Omega \\
u&=&0&\textrm{in }\ \R^n\backslash \Omega \\
u/d^{s-1} &=&\varphi(x)&\textrm{on }\ \partial\Omega.
\end{array}\right.\end{equation}
The well-posedness of this problem was proved at nearly the same time and with completely independent methods by Grubb \cite{Grubb} (for all $s>0$) and by Abatangelo \cite{A} \footnote{The problem studied in \cite{A} was not exactly formulated as \eqref{eqlinear2}, but it can be shown that in fact the two problems are equivalent.} (for $s\in(0,1)$).

Notice that the quantity $u/d^{s-1}$ on $\partial\Omega$ has to be understood as a limit.
In particular, solutions to these nonhomogeneous problems behave in general like $d^{s-1}$ near the boundary, while the unique solution $u\in H^s(\R^n)$ of the homogeneous problem \eqref{eqlinear} behaves like $d^s$ (by Theorem \ref{thGrubb}).

Note, therefore, that \cite{Grubb} treats both problems \eqref{eqlinear} and \eqref{eqlinear2} ---while here we only deal with $H^s(\R^n)$ solutions to \eqref{eqlinear}.
Notice also that when $s=1$ \eqref{eqlinear2} is the Dirichlet problem for $\Delta$ with boundary conditions $u=\varphi$ on $\partial\Omega$, while for $s=2$ it is the Dirichlet problem for $\Delta^2$ with boundary conditions $u=0$ and $\partial_\nu u=\varphi$ on $\partial\Omega$.
\end{rem}

The paper is organized as follows.
In Section \ref{sec-ingredients} we present the main ingredients of the proof of Theorem \ref{intparts}.
In Section \ref{sec3} we show Proposition \ref{thlaps/2}.
In Section \ref{sec2} we prove Theorem \ref{intparts} for strictly star-shaped domains.
In Section \ref{sec4} we finish the proof of Theorem \ref{intparts} and we prove Theorem \ref{corintparts} and Corollaries \ref{thpoh} and \ref{cor-uniquecont}.
Finally, in the Appendix we prove Lemma \ref{computation}.

\section{Ingredients of the proof}
\label{sec-ingredients}

The formal ideas of the proof of the Pohozaev identity for $s>1$ are essentially the same as in the case $s\in(0,1)$ \cite{RS-Poh}.
However, some parts must be carefully adapted, especially when obtaining the singular behavior of $(-\Delta)^{s/2}u$ near $\partial\Omega$, given by Proposition \ref{thlaps/2} below.
We will skip the details of the parts that are similar to the case $s\in(0,1)$, to focus in the parts that present new mathematical difficulties.

To prove Theorem \ref{intparts} we proceed as follows.
We first assume the domain $\Omega$ to be star-shaped with respect to the origin and $(-\Delta)^s u$ to be smooth.
The result for general smooth domains follows from this star-shaped case, as in our proof of the case $s\in(0,1)$.
Moreover, the result for functions $u$ satisfying only $u\in H^s(\R^n)$ and $(-\Delta)^su\in L^\infty(\Omega)$ follows by approximation using Theorem \ref{thGrubb}.

When the domain is star-shaped, one shows that
\[\int_\Omega(x\cdot\nabla u)(-\Delta)^su\ dx=\frac{2s-n}{2}\int_{\Omega}u(-\Delta)^s u\,dx+\frac12\left.\frac{d}{d\lambda}\right|_{\lambda=1^+}\int_{\R^n} w_{{\lambda}} w_{1/{\lambda}}dy,\]
where
\[w(x)=(-\Delta)^{s/2}u(x)\quad \textrm{and}\quad w_\lambda(x)=w(\lambda x).\]
Then, Theorem \ref{intparts} is equivalent to the following identity
\begin{equation}\label{partdificil}
-\left.\frac{d}{d\lambda}\right|_{\lambda=1^+}\int_{\mathbb R^n} w_{{\lambda}} w_{1/{\lambda}}\ dy=
\Gamma(1+s)^2\int_{\partial\Omega}\left(\frac{u}{d^s}\right)^2(x\cdot\nu)d\sigma.
\end{equation}
The quantity $\frac{d}{d\lambda}|_{\lambda=1^+}\int_{\R^n}w_{\lambda}w_{1/\lambda}$ vanishes for any $C^1(\R^n)$ function $w$, as can be seen by differentiating under the integral sign.
Instead, we will see that, under the hypotheses of Theorem \ref{intparts}, the function $w=(-\Delta)^{s/2} u$ has a logarithmic singularity along the boundary $\partial\Omega$, and that \eqref{partdificil} holds.

The identity \eqref{partdificil} is the difficult part of the proof of Theorem \ref{intparts}.
To establish it, it is crucial to know the precise behavior of $(-\Delta)^{s/2}u$ near $\partial\Omega$ ---from both inside and outside $\Omega$.
This is given by the following result.

\begin{prop}\label{thlaps/2}
Let $\Omega$ be a bounded and smooth domain, and let $d$ be as in Definition \ref{d}.
Let $u\in H^s(\R^n)$ be a function such that $u\equiv0$ in $\mathbb R^n\backslash\Omega$ and that $(-\Delta)^su$ is $C^\infty(\overline \Omega)$.

Let $\delta(x)=\textrm{dist}(x,\partial\Omega)$ ---notice that $d\equiv0$ in $\R^n\setminus\overline\Omega$, while $\delta>0$ therein.

Then, there is a $C^\infty(\R^n)$ extension $v$ of $u/d^s|_{\Omega}$ such that
\begin{equation}\label{**}
(-\Delta)^{s/2}u(x)=c_1\left\{\log^-\delta(x)+c_2\chi_{\Omega}(x)\right\}v(x)+h(x)\quad \textrm{in } \ \R^n,
\end{equation}
where $h$ is a $C^\alpha(\R^n)$ function for some $\alpha>0$, $\log^-t=\min\{\log t,0\}$, and $c_1$ and $c_2$ are constants that depend only on $s$.

Moreover, for all $\beta\in(0,+\infty)$,
\begin{equation}\label{numeret}
\left[(-\Delta)^{s/2}u\right]_{C^{\beta}(\{x\in\R^n:\,d(x)\ge\rho\})}\leq C\rho^{-\beta}\qquad \textrm{for all}\ \  \rho\in(0,1),
\end{equation}
for some constant $C$ which does not depend on $\rho$.
\end{prop}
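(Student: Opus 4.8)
The plan is to reduce the computation of $(-\Delta)^{s/2}u$ near $\partial\Omega$ to the model situation in which $u$ is replaced by $d^s$ times a smooth function, and then to analyze carefully the half-Laplacian (more precisely, the $s/2$-power) of $d^s$ near the boundary. First I would invoke Theorem \ref{thGrubb}: since $(-\Delta)^su\in C^\infty(\overline\Omega)$, we have $u/d^s\in C^\infty(\overline\Omega)$, so there is a $C^\infty(\R^n)$ function $v$ with $u=v\,d^s$ in $\Omega$; fix such an extension $v$. The key point is that $(-\Delta)^{s/2}$ is an operator of order $s$, so applying it to a function behaving like $d^s$ near $\partial\Omega$ produces exactly a borderline (logarithmically singular) object, which accounts for the $\log^-\delta$ term. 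To make this precise I would first treat the one-dimensional / flat model: compute $(-\Delta)^{s/2}\bigl((x_n)_+^s\bigr)$ in $\R^n$ (or equivalently the $1$D computation of $(-d^2/dx^2)^{s/2}(x_+^s)$), which should yield a constant times $\bigl(\log^-|x_n| + c_2\chi_{\{x_n>0\}}\bigr)$ up to a Hölder-continuous remainder — this is the content of Lemma \ref{computation} (proved in the Appendix), whose constants $c_1,c_2$ depend only on $s$. The sign conventions must be handled with care: for $s\in(0,1)$ there is a known analogous expansion for $(-\Delta)^{s/2}$ of the $s$-harmonic-type profile, and here $s/2$ may itself be bigger than $1$, so one should factor $(-\Delta)^{s/2}=(-\Delta)^{s/2-k}\circ(-\Delta)^k$ with $k=\lceil s/2\rceil-1$ and track how the derivatives $(-\Delta)^k$ act on $d^s$ (producing $d^{s-2k}$ times smooth plus milder terms, with $s-2k\in(0,2]$).

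Second, I would localize: cover a neighborhood of $\partial\Omega$ by finitely many charts, flatten the boundary by a smooth diffeomorphism, and write $u=v\,d^s = v\,\delta^s + (\text{smoother})$ near $\partial\Omega$, where after flattening $\delta$ becomes (comparable to) the distance to a hyperplane. The difference between $(-\Delta)^{s/2}u$ and its flat-model counterpart should be controlled by estimating the kernel of the nonlocal operator and the curvature/chart errors; these contributions are Hölder continuous because each "extra" power of $d$ (or each extra derivative hitting the smooth factor $v$ or the diffeomorphism) improves the regularity by a definite amount, pushing the exponent strictly past the $\log$ threshold. The regularity away from the singular part — i.e., that $h\in C^\alpha(\R^n)$ — follows by combining the interior regularity of $(-\Delta)^{s/2}u$ inside $\Omega$ and in $\R^n\setminus\overline\Omega$ (where $u\equiv0$, so $(-\Delta)^{s/2}u$ is given by a convergent integral of a $C^\infty$ tail) with the boundary expansion just obtained.

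For the estimate \eqref{numeret}, the idea is a scaling/covering argument: at a point $x$ with $d(x)\approx\rho$, rescale the ball $B_{\rho/2}(x)$ to unit size. The rescaled function $\tilde u(y)=u(x+\rho y)$ satisfies an equation with right-hand side of size controlled after rescaling, and since on $\{d\ge\rho\}$ one is at distance $\gtrsim\rho$ from the boundary, interior estimates for $(-\Delta)^{s/2}u$ (which has order $s$, hence $\beta$ derivatives cost $\rho^{-\beta}$ under this rescaling) give the bound $[(-\Delta)^{s/2}u]_{C^\beta}\le C\rho^{-\beta}$; one must also check that the nonlocal tails coming from the region within distance $\rho$ of $\partial\Omega$, where $u$ is only bounded by $Cd^s\le C\rho^s$ doesn't help directly, are nevertheless harmless because they are evaluated at points at distance $\gtrsim\rho$, so the kernel decay absorbs them with the claimed $\rho$-power.

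The main obstacle I expect is the precise identification of the singular profile and its constants $c_1,c_2$ — i.e., showing that $(-\Delta)^{s/2}$ applied to $d^s v$ produces \emph{exactly} $c_1(\log^-\delta + c_2\chi_\Omega)v$ plus something genuinely $C^\alpha$, uniformly in the chart, with the same constants from inside and outside $\Omega$. Getting the two-sided (interior and exterior) matching of the logarithmic term and the jump term $c_2\chi_\Omega$ right, and verifying that all curvature corrections and the contribution of the smooth factor $v$ (and its derivatives) only yield Hölder-continuous errors rather than further logarithms, is the delicate computational core; this is where the explicit model computation (Lemma \ref{computation}) does the heavy lifting.
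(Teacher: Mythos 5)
Your overall plan (reduce to a one-dimensional model profile, show the log singularity appears there, then transfer to the curved domain and check the remainder is H\"older) is in the same spirit as the paper's proof, but there are several concrete problems.

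First, a misattribution: the 1D computation of $(-\Delta)^{s/2}$ of $(x_+)^s$ giving the $c_1\{\log^-|x|+c_2\chi_{(0,\infty)}\}$ expansion is \emph{not} Lemma~\ref{computation}. Lemma~\ref{computation} gives the explicit solution $c(1-|x|^2)_+^s$ of $(-\Delta)^su=1$ in the ball and is used in Section~\ref{sec2} to pin down the constant $\Gamma(1+s)^2$, not $c_1,c_2$. The model computation you describe is Lemma~\ref{claim1}, and it is applied not to $(x_+)^s$ itself (which grows and is not in the domain of $(-\Delta)^{s/2}$) but to a bounded truncation $\phi$ agreeing with $(x_+)^s$ near the origin --- this truncation step matters and you omit it.

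Second, and more seriously, your proposed boundary-flattening step has a genuine gap. You suggest covering $\partial\Omega$ by charts, flattening by diffeomorphisms, and then controlling ``curvature/chart errors.'' But $(-\Delta)^{s/2}$ does not transform simply under a diffeomorphism: the pushforward is a variable-coefficient pseudodifferential operator, and comparing $(-\Delta)^{s/2}d^s$ to the flat model computed in new coordinates is not a pointwise kernel estimate --- it is exactly the kind of $\mu$-transmission analysis Grubb develops, which this paper deliberately avoids. The paper instead works \emph{in the original coordinates}: for each $x_0\in\partial\Omega$ it introduces the genuinely 1D profile $\phi_{x_0}(x)=\phi(-\nu(x_0)\cdot(x-x_0))$ and compares $d^s$ to $\phi_{x_0}$ along the normal segment $S_{x_0}$, exploiting that the two agree to second order there (Lemma~\ref{claim2}); no change of variables touches the operator. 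To promote the resulting normal-segment estimates to global H\"older regularity of $h$, the paper also needs a separate gluing lemma (Claim~\ref{claimMMM}) combining the pointwise comparison with a gradient bound $|\nabla h|\lesssim\delta^{-1}$ --- a step absent from your sketch.

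Third, you do not explain how the smooth factor $v$ comes out of $(-\Delta)^{s/2}(vd^s)$. The paper does this in Proposition~\ref{proplaps2}: it factors $(-\Delta)^{s/2}=(-\Delta)^{s_0}(-\Delta)^k$, uses Lemma~\ref{s>2} to get $(-\Delta)^kd^s=c_sd^{2s_0}+\text{(H\"older)}$, and then applies the bilinear product identity $(-\Delta)^{s_0}(w_1w_2)=w_1(-\Delta)^{s_0}w_2+w_2(-\Delta)^{s_0}w_1-I_{s_0}(w_1,w_2)$ to pull $v$ out up to a H\"older error. Without some such identity, writing ``$(-\Delta)^{s/2}u=v(-\Delta)^{s/2}d^s+\text{(H\"older)}$'' is an unproved claim, not a consequence of locality (which the operator lacks). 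The scaling argument you outline for \eqref{numeret} is, by contrast, essentially the paper's Lemma~\ref{prop12345}.
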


The constant $c_2$ comes from an involved expression and it is nontrivial to compute.
Instead of computing it explicitly, we compute directly the constant $\Gamma(1+s)^2$ in Theorem \ref{intparts} by using an
explicit solution $u$ to problem \eqref{eq} in a ball.
This explicit solution is given by the following lemma.

\begin{lem}\label{computation}
Let $s>1$ be any noninteger number.
Then, the solution to problem
\[\left\{ \begin{array}{rcll}
(-\Delta)^s u &=&1&\textrm{in }B_1(0) \\
u&=&0&\textrm{in }\mathbb R^n\backslash B_1(0)
\end{array}\right.\]
is given by
\[u(x)=\frac{2^{-2s}\Gamma(n/2)}{\Gamma\left(\frac{n+2s}{2}\right)\Gamma(1+s)}\left(1-|x|^2\right)^s\qquad\textrm{in}\ \ B_1(0),\]
where $\Gamma$ is the Gamma function.
\end{lem}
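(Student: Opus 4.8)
The plan is to verify directly that the proposed function $u(x)=c_{n,s}(1-|x|^2)_+^s$, with $c_{n,s}=2^{-2s}\Gamma(n/2)/\big(\Gamma(\tfrac{n+2s}{2})\Gamma(1+s)\big)$, solves the stated problem; uniqueness of the $H^s(\R^n)$ solution (Remark \ref{rem1}) then finishes the argument, since one checks easily that $(1-|x|^2)_+^s\in H^s(\R^n)$ when $s>1$ (indeed it is $C^1$ across $\partial B_1$ for $s>1$, and more regularity is not needed). Writing $(-\Delta)^s=(-\Delta)^{s-1}\circ(-\Delta)$ as in the Introduction, I would first compute $(-\Delta)\big[(1-|x|^2)_+^s\big]$ inside $B_1$: a direct calculation gives $-\Delta(1-|x|^2)^s=2s(n+2s-2)(1-|x|^2)^{s-1}-4s(s-1)(1-|x|^2)^{s-2}|x|^2$, which after writing $|x|^2=1-(1-|x|^2)$ becomes a linear combination $a(1-|x|^2)^{s-1}+b(1-|x|^2)^{s-2}$ in $B_1$; but one must be careful that $(-\Delta)u$ as a distribution on all of $\R^n$ also sees the possible jump in normal derivative across $\partial B_1$. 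Since $s>1$, $u$ is $C^1(\R^n)$, so there is no surface-measure contribution, and $(-\Delta)u$ agrees in $\R^n$ with a function that near $\partial B_1$ behaves like $(1-|x|^2)^{s-2}$ extended by $0$ — hence is in $H^{s-1}$ only if we reduce to the homogeneous problem, which is exactly what the induction will exploit.

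The cleaner route, which I would actually carry out, is to use the known one-parameter family of explicit identities for the fractional Laplacian of $(1-|x|^2)_+^{\sigma}$: for $\sigma\in(0,1)$ one has the classical Getoor/Dyda formula
\[(-\Delta)^{t}\big[(1-|x|^2)_+^{t}\big]=\frac{2^{2t}\,\Gamma(t+1)\,\Gamma\big(\tfrac{n}{2}+t\big)}{\Gamma\big(\tfrac{n}{2}\big)}\qquad\text{in }B_1,\]
valid for $t\in(0,1)$. I would then iterate. Write $s=k+t$ with $k=\lceil s\rceil-1\ge1$ an integer and $t\in(0,1)$ (noninteger $s$ is assumed). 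Apply $(-\Delta)^k$ (an honest differential operator) to $(1-|x|^2)_+^s$ and show by induction on $k$ that, modulo a function supported outside $B_1$ plus lower-order boundary-regular pieces, one obtains a multiple of $(1-|x|^2)_+^t$ inside $B_1$ up to adding polynomials; the constants multiply in exactly the way that telescopes the Gamma factors. Then apply the Getoor–Dyda identity with exponent $t$ to conclude that $(-\Delta)^s u$ is the claimed constant in $B_1$. Matching constants: $(-\Delta)^k$ contributes a product $\prod$ of factors of the form $2^2\cdot(\text{shifted Gamma ratios})$ that collapses to $2^{2k}\Gamma(s+1)\Gamma(\tfrac n2+s)/\big(\Gamma(t+1)\Gamma(\tfrac n2+t)\big)$, and combining with the $t$-level constant gives $2^{2s}\Gamma(s+1)\Gamma(\tfrac n2+s)/\Gamma(\tfrac n2)$, whose reciprocal (times the normalization) is precisely $c_{n,s}$ so that $(-\Delta)^s u\equiv1$.

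The main obstacle I anticipate is bookkeeping the terms that are \emph{not} of the pure form $(1-|x|^2)_+^{\sigma}$: applying the Laplacian repeatedly produces a whole lower-order tail $\sum_{j} c_j (1-|x|^2)_+^{s-j}$, and for the intermediate exponents $s-j$ with $s-j>1$ each such term, extended by zero, lies in a fractional Sobolev space on which the remaining fractional power acts, but contributes $0$ to $(-\Delta)^{s}u$ \emph{inside} $B_1$ only after one argues that $(-\Delta)^{s-k}$ applied to the extension-by-zero of a function vanishing to order $>s-k$ at $\partial B_1$ is again regular up to the boundary and in fact that the relevant inside-$B_1$ contribution is captured entirely by the top term — this is where one must invoke Grubb's regularity (Theorem \ref{thGrubb}) to justify that the ``error'' terms do not disturb the value of $(-\Delta)^s u$ in the interior, or alternatively verify by hand using the integral representation of $(-\Delta)^{s-k}$ for $s-k\in(0,1)$ that those pieces integrate to something smooth on $\overline{B_1}$ and, by the scaling/homogeneity of the explicit formulas, vanish at the relevant order. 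A safer alternative that avoids the tail entirely is to stay one level up: instead of $(-\Delta)^k$, apply a single $(-\Delta)$ and reduce $s\rightsquigarrow s-1$ by induction on $\lceil s\rceil$, using at each step the two-term identity above together with the already-established Lemma for the exponent $s-1$ (and, at the base, the Getoor–Dyda formula); the two coefficients $a,b$ are explicit, the term $b(1-|x|^2)_+^{s-2}$ is handled by the inductive hypothesis applied to exponent $s-2$, and the Gamma recursion $\Gamma(z+1)=z\Gamma(z)$ makes the constants telescope. I would present this inductive version, flagging the verification $u\in H^s(\R^n)$ and the $C^1$-gluing across $\partial B_1$ as the points needing the hypothesis $s>1$. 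Full details of the (elementary but tedious) constant-chasing I would relegate to the Appendix, as the paper's organization section indicates.
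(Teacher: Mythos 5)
Your overall strategy of reducing to the Getoor/Dyda computation and then invoking uniqueness of the $H^s(\R^n)$ solution is the right framework, but the central reduction is ordered the wrong way and as a consequence has a genuine gap that the proposed ``bookkeeping'' and Grubb-based justification will not close. You write $(-\Delta)^s=(-\Delta)^{s_0}\circ(-\Delta)^k$ and apply the \emph{local} operator $(-\Delta)^k$ (or iterate $(-\Delta)$) first, then the fractional power. After one Laplacian you get, inside $B_1$, a sum $a(1-|x|^2)_+^{s-1}+b(1-|x|^2)_+^{s-2}$; you claim the second term ``is handled by the inductive hypothesis applied to exponent $s-2$.'' But the induction produces $(-\Delta)^{s-2}(1-|x|^2)_+^{s-2}$, whereas what you actually need is $(-\Delta)^{s-1}(1-|x|^2)_+^{s-2}$: the fractional order and the exponent no longer match, so the inductive hypothesis says nothing. (One can in fact check that this tail term \emph{vanishes} in $B_1$ --- in Dyda's general formula for $(-\Delta)^{\sigma}(1-|x|^2)_+^{p}$ the prefactor carries $\Gamma(p+1-\sigma)$ in the denominator, and for $p=s-2$, $\sigma=s-1$ this is $\Gamma(0)=\infty$ --- but your argument never observes this, and without it the proof does not close.) Worse, for $s>2$ repeated Laplacians produce exponents $s-2j<-1$, whose extensions by zero are not even locally integrable; at that point $(-\Delta)^k u$ is a genuine distribution with singular layers on $\partial B_1$, not an $L^1_{\mathrm{loc}}$ function, and the splitting into powers is no longer legitimate. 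Appealing to Theorem \ref{thGrubb} to discard these ``error pieces'' is not adequate: Grubb's theorem controls the boundary regularity of solutions of the Dirichlet problem, it does not tell you that nonlocal operators applied to extensions by zero of non-integrable powers vanish inside $B_1$.

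The paper's proof avoids all of this by applying the operators in the opposite order. It uses Dyda's explicit formula for $(-\Delta)^{s_0}(1-|x|^2)_+^{s}$ directly at the \emph{full} exponent $s=k+s_0$, which gives
\[(-\Delta)^{s_0}(1-|x|^2)_+^{s}=\mathrm{const}\cdot\;_2F_1\!\left(\tfrac{n}{2}+s_0,-k;\tfrac{n}{2};|x|^2\right)\quad\text{in }B_1,\]
and the key observation is that the second parameter $-k$ is a negative integer, so the hypergeometric series terminates: the right-hand side is a polynomial of degree $2k$ in $x$. Applying the \emph{local} differential operator $(-\Delta)^k$ to a degree-$2k$ polynomial kills everything except the top coefficient and yields a constant outright; the constant is then collapsed via $\Gamma(1+z)=z\Gamma(z)$ and $(-k)_k=(-1)^k k!$. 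No tail terms, no negative exponents, no surface-layer distributions, and no appeal to interior regularity theory. I would recommend you rewrite the proof along these lines: quote Dyda's formula for $(-\Delta)^{s_0}$ of $(1-|x|^2)_+^{s}$, note the polynomial termination, and compute $(-\Delta)^k$ of the leading monomial $|x|^{2k}$.
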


For $s\in(0,1)$, this expression was obtained by Getoor \cite{G} in 1961.
Here, we show that the same expression holds for $s>1$ by using some explicit computations of Dyda \cite{Dyda} ---see the proof in the Appendix.

\section{Behavior of $(-\Delta)^{s/2}u$ near $\partial\Omega$}
\label{sec3}

The aim of this section is to prove Proposition \ref{thlaps/2}.
We will split its proof into three partial results.
The first one is the following, and compares the behavior of $(-\Delta)^{s/2}u$ near $\partial\Omega$ with the one of $(-\Delta)^{s/2}d^s$.

\begin{prop}\label{proplaps2}
Let $\Omega$ be a bounded and smooth domain, $u$ be a function satisfying the hypotheses of Proposition \ref{thlaps/2}, and $d$ be as in Definition \ref{d}.
Then, there exists a $C^\infty(\R^n)$ extension $v$ of $u/d^s|_\Omega$ such that
\[(-\Delta)^{s/2}u(x)=v(x)(-\Delta)^{s/2}d^s(x)+h(x)\ \mbox{ in }\ \R^n,\]
where $h\in C^\alpha(\R^n)$ for some $\alpha>0$.
\end{prop}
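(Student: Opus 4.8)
The plan is to write $u = v\,d^s + (u - v\,d^s)$ for a suitable smooth extension $v$ of $u/d^s|_\Omega$, apply $(-\Delta)^{s/2}$, and show that the error term $(-\Delta)^{s/2}(u - v\,d^s)$ is $C^\alpha(\R^n)$ for some $\alpha>0$. The choice of $v$ is dictated by Theorem \ref{thGrubb}: since $(-\Delta)^s u\in C^\infty(\overline\Omega)$, that theorem gives $u/d^s\in C^\infty(\overline\Omega)$, so it has a $C^\infty(\R^n)$ extension, which we call $v$. The point is then that $w := u - v\,d^s$ vanishes in $\R^n\setminus\Omega$ and, crucially, $w/d^s \to 0$ at $\partial\Omega$, i.e. $w$ vanishes to higher order than $d^s$ near the boundary; combined with interior regularity this should force $(-\Delta)^{s/2}w$ to be Hölder continuous.

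The key steps, in order, would be: (i) invoke Theorem \ref{thGrubb} to get $v\in C^\infty(\R^n)$ extending $u/d^s|_\Omega$; (ii) analyze the regularity of $(-\Delta)^s(v d^s)$ in $\overline\Omega$ --- here one uses that $(-\Delta)^s d^s$ is smooth in $\overline\Omega$ (last line of Theorem \ref{thGrubb}) together with a commutator/Leibniz-type estimate for $(-\Delta)^s$ acting on the product $v\,d^s$, to conclude $(-\Delta)^s w\in C^\alpha(\overline\Omega)$ for some $\alpha>0$, or at least is bounded; (iii) apply Theorem \ref{thGrubb} again to the function $w$ itself to deduce $w/d^s\in C^{s-\epsilon}(\overline\Omega)$, and since $w/d^s = 0$ on $\partial\Omega$, obtain the extra vanishing $|w(x)|\le C\,d(x)^s\,\delta(x)^{\gamma}$ near $\partial\Omega$ for some $\gamma>0$; (iv) split $(-\Delta)^{s/2}w$ into the contribution of $(-\Delta)^{s/2}$ as composition $(-\Delta)^{s/2-1}\circ(-\Delta)$ (when $s>2$) or directly via the integral representation of $(-\Delta)^{s/2}$ for $s/2\in(0,1)$, i.e. $s\in(1,2)$, and estimate the resulting singular integral using the improved boundary vanishing from (iii) and the interior $C^\infty$ bounds away from $\partial\Omega$; (v) conclude $(-\Delta)^{s/2}w\in C^\alpha(\R^n)$, so $h := (-\Delta)^{s/2}w$ does the job and $(-\Delta)^{s/2}u = v\,(-\Delta)^{s/2}d^s + h$.

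I expect the main obstacle to be step (iv): proving that $(-\Delta)^{s/2}w$ is genuinely Hölder continuous \emph{across} $\partial\Omega$, uniformly from inside and outside. The operator $(-\Delta)^{s/2}$ with $s/2$ possibly larger than $1$ is nonlocal and, for $s>2$, of higher order, so one cannot just use the Riesz kernel of a single fractional Laplacian; one must either iterate ($(-\Delta)^{s/2} = (-\Delta)^{s/2-\lfloor s/2\rfloor}\circ(-\Delta)^{\lfloor s/2\rfloor}$) and track how each application interacts with the boundary singularity $d^s$, or use the Fourier-side parametrix estimates of Grubb. The delicate point is that $(-\Delta)^{s/2}d^s$ itself has a $\log\delta$ singularity along $\partial\Omega$ (this is exactly what Proposition \ref{thlaps/2} will extract), so the regularity gain for $w$ must be quantitatively strong enough --- the extra power $\delta^{\gamma}$ from (iii) --- to kill that logarithm and leave a $C^\alpha$ remainder. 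Carrying the estimate near the boundary will require the localized Hölder bound \eqref{numeret}-type control on $(-\Delta)^{s/2}$ of the truncated pieces, which in turn rests on the two estimates of Theorem \ref{thGrubb}. A secondary technical nuisance is the non-integer versus integer distinction in the order of the intermediate operators, but this only affects bookkeeping, not the substance of the argument.
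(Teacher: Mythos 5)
There is a genuine gap, and it is conceptual. You decompose $u = v\,d^s + w$ with $w := u - v\,d^s$ and propose to show $h := (-\Delta)^{s/2}w$ is H\"older. But $w\equiv 0$ on all of $\R^n$: inside $\Omega$, $v = u/d^s$ so $v\,d^s=u$; outside $\Omega$, both $u$ and $d$ vanish by hypothesis and by Definition \ref{d}. So the error term you plan to estimate is identically zero, steps (iii)--(iv) of your plan (boundary vanishing of $w$, singular-integral bounds for $(-\Delta)^{s/2}w$) are vacuous, and your final conclusion would read $(-\Delta)^{s/2}u = v\,(-\Delta)^{s/2}d^s$, which is false. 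The nontrivial content of Proposition \ref{proplaps2} is precisely the statement that the \emph{commutator}
\[
h \;=\; (-\Delta)^{s/2}\!\left(v\,d^s\right) - v\,(-\Delta)^{s/2}d^s
\]
is $C^\alpha(\R^n)$; since $(-\Delta)^{s/2}$ is nonlocal and does not obey a pointwise Leibniz rule, this is not a consequence of any regularity of $u-v\,d^s$.

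The paper's proof goes after that commutator directly. Writing $s/2 = k + s_0$ with $k\in\mathbb Z$ and $s_0\in(0,1)$, one first applies $(-\Delta)^k$ to $v\,d^s$ using the classical Leibniz rule: all cross terms involve at most $2k-1$ derivatives of $d^s\in C^s$, hence lie in $C^{s-2k+1}=C^{2s_0+1}$, and survive passage through $(-\Delta)^{s_0}$ as $C^\alpha$ remainders. The leading term $v\,(-\Delta)^k d^s$ is then rewritten via Lemma \ref{s>2} as $c_s\,v\,d^{2s_0}$ plus a $C^{2s_0+1}$ error. Finally, for the fractional part $s_0\in(0,1)$ one invokes the exact product identity
\[
(-\Delta)^{s_0}(w_1w_2)=w_1(-\Delta)^{s_0}w_2+w_2(-\Delta)^{s_0}w_1-I_{s_0}(w_1,w_2),
\]
with $w_1=v\in C^\infty$ and $w_2=d^{2s_0}\in C^{2s_0}$, and checks that the last two terms are $C^\alpha$ because the regularities of $v$ and $d^{2s_0}$ sum to more than $2s_0$. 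This chain of commutator controls is what your proposal is missing; without the integer/fractional splitting and the explicit $I_{s_0}$ identity, the statement does not follow from the boundary decay of any auxiliary function.
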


The following result gives the behavior of $(-\Delta)^s d^s$ near $\partial\Omega$.

\begin{prop}\label{prop:Lap-s/2-delta-s}
Let $\Omega$ be a bounded and smooth domain.
Let $d$ be as in Definition \ref{d}, and $\delta(x)=\textrm{dist}(x,\partial\Omega)$.
Then,
\[(-\Delta)^{s/2}d^s(x)=c_1\left\{\log^- \delta(x)+c_2\chi_\Omega(x)\right\}+h(x)\ \mbox{ in }\ \R^n,\]
where $c_1$ and $c_2$ are constants, $h$ belongs to $C^\alpha(\R^n)$, and $\log^-t=\min\{\log t,0\}$.
Moreover, the constants $c_1$ and $c_2$ depend only on $s$.
\end{prop}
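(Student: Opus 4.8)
The plan is to compute $(-\Delta)^{s/2}d^s$ by reducing it, through a flattening change of variables near the boundary, to a one–dimensional model computation for the half-space. First I would note that away from $\partial\Omega$ there is nothing to prove: for $x$ in a compact subset of $\Omega$ the function $d^s$ is smooth near $x$ and $(-\Delta)^{s/2}$ applied to a smooth-with-compact-support-type profile is as regular as one wishes there, while for $x$ bounded away from $\overline\Omega$ one is integrating the kernel of $(-\Delta)^{s/2}$ against $d^s$, which is smooth and compactly supported, so again the result is smooth; in both regimes $\log^-\delta$ and $\chi_\Omega$ are locally constant, so the claimed formula holds with $h$ absorbing everything. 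Thus the entire content is the behavior in a tubular neighborhood $\mathcal N$ of $\partial\Omega$, and there I would use the standard diffeomorphism $\mathcal N\cong \partial\Omega\times(-r_0,r_0)$ sending $x\mapsto(x',\delta(x))$ (with sign, $\delta>0$ inside), under which $d$ is comparable to the signed distance up to a smooth positive factor.

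The key step is the \emph{half-space model}: I would first establish, by an explicit (Getoor/Dyda-type) computation, the identity
\[
(-\Delta)^{s/2}\bigl((x_n)_+^s\bigr)(x) = c_1\bigl\{\log^-x_n + c_2\,\chi_{\{x_n>0\}}(x)\bigr\} + (\text{smooth in }x_n)
\]
on $\R$, for suitable constants $c_1,c_2$ depending only on $s$ — this is where the logarithm comes from: applying a fractional power of order $s/2$ (which is \emph{larger} than the exponent $s$ of the one-sided power only in the sense that $s/2<s$, yet the relevant "resonance" is $s - 2\cdot(s/2)\cdot\frac12$... more precisely, $(-\Delta)^{s/2}$ of $x_+^{s}$ would naively give $x^{s-2\cdot(s/2)}=x^{0}$, a constant, and the borderline homogeneity $0$ is exactly what produces a $\log$). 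The constants $c_1,c_2$ are characterized by this model and hence depend only on $s$, which is what the proposition asserts; I would not compute them, precisely as the paper announces (Lemma \ref{computation} will pin down the final constant instead). Then I would transfer this: write $d^s = (\delta_{\mathrm{signed}})_+^s \cdot e(x)$ with $e$ smooth positive near $\partial\Omega$, freeze coefficients at a boundary point $x_0\in\partial\Omega$, and show that the difference between $(-\Delta)^{s/2}d^s$ and the frozen half-space model is $C^\alpha$. The frozen-coefficient error is controlled because the kernel of $(-\Delta)^{s/2}$ has a scaling that makes the difference of the two profiles (true $d^s$ vs. its tangent half-space power at $x_0$) vanish to higher order, and multiplication by the smooth factor $e$ and by the smooth Jacobian of the flattening only contributes $C^\infty$ terms (again via Theorem \ref{thGrubb}-type regularity, since $(-\Delta)^s d^s$ is smooth in $\overline\Omega$). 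Patching over $x_0\in\partial\Omega$ using a partition of unity on $\partial\Omega$ and combining with the interior/exterior estimates gives the global formula with $h\in C^\alpha(\R^n)$.

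The main obstacle I expect is the rigorous transfer from the flat model to the curved boundary while keeping the remainder genuinely $C^\alpha$ \emph{up to and across} $\partial\Omega$: one must show that the logarithmic singularity is \emph{exactly} $c_1\log^-\delta(x) + c_1 c_2\chi_\Omega(x)$ with the same $(c_1,c_2)$ at every boundary point (no tangential or curvature-dependent corrections at leading order), and that all lower-order geometric terms combine into something Hölder rather than merely bounded. Concretely, the delicate point is estimating, for $x$ near $x_0\in\partial\Omega$, the nonlocal integral
\[
\int_{\R^n}\frac{d^s(y) - P_{x_0}(y)}{|x-y|^{n+s}}\,dy,
\]
where $P_{x_0}$ is the tangent half-space power, and showing it is $C^\alpha$ uniformly in $x_0$; this requires carefully splitting into the near-diagonal region (where the numerator vanishes to order $>s$ by Taylor expansion of the boundary and of $e$) and the far region (where the kernel decay suffices), and checking the constants match. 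I would handle the uniformity via the boundary partition of unity and the compactness/smoothness of $\partial\Omega$. The estimate \eqref{numeret} on Hölder seminorms at distance $\rho$ from the boundary, needed later, would come out of the same analysis by tracking how the $\log^-\delta$ term's derivatives blow up like $\delta^{-\beta}$.
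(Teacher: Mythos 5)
Your overall strategy — reduce to a one–dimensional half-space model, freeze coefficients at a boundary point, and show the remainder is H\"older — is the same skeleton as in the paper, and the heuristic for why a $\log$ appears (the homogeneity $s - 2\cdot(s/2) = 0$ is borderline) is correct. However, there is a genuine gap in the final step, and one misleading intermediate step.

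The gap: your plan is to directly show that the remainder is $C^\alpha(\R^n)$ ``up to and across $\partial\Omega$'' by ``carefully splitting'' the nonlocal integral into near and far parts. This will not work as stated, because the function $w = (-\Delta)^{s/2}d^s - c_1\{\log^-\delta + c_2\chi_\Omega\}$ has gradient blowing up like $\delta(x)^{-1}$ as $x\to\partial\Omega$ (both pieces do, individually), so no direct first-order estimate gives a uniform $C^\alpha$ modulus across the boundary. The paper resolves this with a dedicated abstract criterion (Claim \ref{claimMMM} in the text, quoted from \cite{RS-Poh}): to conclude $w\in C^\gamma(\R^n)$ it suffices to have (i) a polynomial blow-up bound $|\nabla w|\le K\delta^{-M}$ near $\partial\Omega$, (ii) the much weaker \emph{one-dimensional} estimate $|w(x)-w(x^*)|\le K\delta(x)^\alpha$ where $x^*$ is the boundary foot-point, and (iii) a H\"older bound away from the boundary. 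Condition (ii) is supplied by comparing $d^s$ to $\phi_{x_0}(x)=\phi(-\nu(x_0)\cdot(x-x_0))$ \emph{only along the normal segment} $S_{x_0}$ (Lemma \ref{claim2}), where the boundary's tangent-plane approximation gives $|d^s-\phi_{x_0}|=O(|y|^2)$ pointwise; this is a far easier estimate than a full two-point H\"older bound on the remainder. Without Claim \ref{claimMMM} or a substitute interpolation argument, your plan does not close.

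Two smaller issues: (1) The flattening diffeomorphism $x\mapsto(x',\delta(x))$ is a red herring — $(-\Delta)^{s/2}$ is not covariant under diffeomorphisms, so after flattening you would not be computing the same operator; the paper deliberately avoids any change of variables and instead compares $d^s$ directly with the pulled-back 1D profile $\phi_{x_0}$. (Your later formula $\int (d^s(y)-P_{x_0}(y))|x-y|^{-n-s}\,dy$ actually abandons the flattening, which is the right move.) (2) You should reduce to $s\in(1,2)$ first (the paper does this via Lemma \ref{s>2}, writing $(-\Delta)^{s/2}d^s = c\,(-\Delta)^{s_1/2}d^{s_1}+h_0$ with $h_0\in C^\alpha$), since the pointwise estimate $|d^s-\phi_{x_0}|\lesssim |y|^2$ near the normal segment needs $s>1$ to be useful against the kernel $|y|^{-n-s}$, and the Lipschitz estimate of the 1D model is cleanest in this range.
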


Recall that, in the previous result, $\delta(x)>0$ in $\R^n\setminus\overline\Omega$, while $d(x)\equiv0$ therein.

The third one gives estimate \eqref{numeret} in Proposition \ref{thlaps/2}, and reads as follows.
It will be also used in the proof of Proposition \ref{prop:Lap-s/2-delta-s}.

\begin{lem}\label{prop12345}
Let $\Omega\subset\R^n$ be a bounded and smooth domain.
Let $u\in H^s(\R^n)$ be a function such that $u\equiv0$ in $\mathbb R^n\backslash\Omega$ and that $(-\Delta)^su$ is $C^\infty(\overline \Omega)$.
Then, for all $\beta\in(0,+\infty)$,
\begin{equation}\label{numeretb}
\left[(-\Delta)^{s/2}u\right]_{C^{\beta}(\{{\rm dist}(x,\partial\Omega)\ge\rho\})}\leq C\rho^{-\beta}\qquad \textrm{for all}\ \  \rho\in(0,1),
\end{equation}
for some constant $C$ that does not depend on $\rho$.
\end{lem}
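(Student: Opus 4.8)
The plan is to obtain the interior Hölder estimate for $w=(-\Delta)^{s/2}u$ away from $\partial\Omega$ by combining the regularity of $u$ in the interior with the decay of $u$ (and its derivatives) at infinity, exploiting the fact that $(-\Delta)^{s/2}$ is an operator of order $s$, while $u$ itself is known to be $C^s$-regular up to the boundary via Theorem \ref{thGrubb}. First I would use Theorem \ref{thGrubb}: since $g:=(-\Delta)^su\in C^\infty(\overline\Omega)$, we have $u/d^s\in C^\infty(\overline\Omega)$, hence $u\in C^s(\R^n)$ globally (it is $d^s$ times a smooth function in $\overline\Omega$ and identically zero outside), with uniform bounds. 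Write $s/2=m+\sigma$ with $m=\lfloor s/2\rfloor\in\{0,1,\dots\}$ and $\sigma\in(0,1)$ if $s/2\notin\mathbb Z$ (the case $s/2\in\mathbb Z$ is easier and handled separately, or absorbed by writing $(-\Delta)^{s/2}=(-\Delta)^{m}(-\Delta)^\sigma$). Then $(-\Delta)^{s/2}u=(-\Delta)^\sigma\big((-\Delta)^m u\big)$, and for a fixed point $x_0$ with $\mathrm{dist}(x_0,\partial\Omega)\ge\rho$ we use the singular integral representation of $(-\Delta)^\sigma$ applied to the function $\tilde u:=(-\Delta)^m u$, which is smooth on $B_{\rho/2}(x_0)$ and $L^\infty$-controlled (with polynomial-in-derivative bounds) on all of $\R^n$ — the latter because $u$ is compactly supported and $C^s$, so $(-\Delta)^m u$, being a local differential operator applied inside $\Omega$ but a genuine nonlocal-in-appearance quantity, must instead be handled as $(-\Delta)^{s/2-\sigma}u$ via its own kernel representation with the global $C^s$ bound on $u$.

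The key steps, in order, are: (i) normalize by rescaling $x\mapsto x_0+\rho x$, setting $u_\rho(x)=u(x_0+\rho x)$, so that $(-\Delta)^{s/2}u$ at scale $\rho$ around $x_0$ corresponds to $\rho^{-s/2}(-\Delta)^{s/2}u_\rho$; (ii) observe that $u_\rho$ is smooth in $B_{1/2}(0)$ with all derivatives bounded independently of $\rho$ (this uses only interior smoothness of $u$ in $\Omega$, and the bound $\mathrm{dist}(x_0,\partial\Omega)\ge\rho$ to ensure $B_{\rho/2}(x_0)\subset\Omega$), while $\|u_\rho\|_{L^\infty(\R^n)}=\|u\|_{L^\infty}$ and, more relevantly, $\|u_\rho\|_{C^s(\R^n)}\le\|u\|_{C^s(\R^n)}$ (Hölder seminorms at exponent $\le s$ do not blow up under this scaling up to harmless factors); (iii) apply the standard interior Schauder-type estimate for $(-\Delta)^{s/2}$ — namely, if a function is $C^s$ globally and $C^{\infty}$ in $B_{1/2}$, then $(-\Delta)^{s/2}$ of it is $C^{\beta}$ in $B_{1/4}$ for every $\beta$, with the estimate depending on the global $C^s$ norm and the local smooth norms — to get $[(-\Delta)^{s/2}u_\rho]_{C^\beta(B_{1/4}(0))}\le C$; (iv) undo the scaling: $[(-\Delta)^{s/2}u]_{C^\beta(B_{\rho/4}(x_0))}\le C\rho^{-\beta}\rho^{-s/2}\cdot\rho^{s/2}=C\rho^{-\beta}$ — the $\rho^{s/2}$ factors from $(-\Delta)^{s/2}u(x_0+\rho x)=\rho^{s/2}\,[(-\Delta)^{s/2}u_\rho](x)$ cancel against the order-$s/2$ normalization, leaving exactly the $\rho^{-\beta}$ from the $\beta$-th order Hölder rescaling; (v) cover the region $\{\mathrm{dist}(x,\partial\Omega)\ge\rho\}$ by finitely many (boundedly overlapping) balls $B_{\rho/4}(x_0)$ and patch the local estimates into the global seminorm bound \eqref{numeretb}.

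I expect the main obstacle to be step (iii): proving the interior estimate $[(-\Delta)^{s/2}v]_{C^\beta(B_{1/4})}\le C(\|v\|_{C^s(\R^n)}+\|v\|_{C^{\lceil\beta+s/2\rceil}(B_{1/2})})$ for arbitrarily large $\beta$. The tail of the integral $\int_{\R^n\setminus B_{1/2}}\frac{v(y)}{|x-y|^{n+2\sigma}}\,dy$ is smooth in $x\in B_{1/4}$ and easily controlled by $\|v\|_{L^\infty}$, so that part poses no difficulty; the delicate part is the near-diagonal piece, where one must extract $\beta$ derivatives. For $s/2\in(0,1)$ this is the classical fact that $(-\Delta)^{s/2}$ maps $C^{k,\gamma}_{loc}\cap C^s(\R^n)$ to $C^{k,\gamma-2\sigma}$ or $C^{k-?,\,\cdot}$ appropriately — one differentiates under the integral sign as many times as the local regularity allows, each differentiation landing on $v$, and controls the resulting kernel singularities using that $v\in C^s(\R^n)$ with $s\ge 2\sigma=s$ (equality, the borderline case, which is exactly why the $C^s$ global regularity from Grubb is the sharp input needed). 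When $s/2\ge 1$ one iterates, peeling off one factor $(-\Delta)^\sigma$ at a time or using $(-\Delta)^{s/2}=(-\Delta)\circ(-\Delta)^{s/2-1}$ and induction on $\lfloor s/2\rfloor$. The bookkeeping of which Hölder exponents survive each step is the only real work; the scaling argument in steps (i)–(v) is then purely mechanical. Since Lemma \ref{prop12345} only claims a finite bound $C\rho^{-\beta}$ for \emph{every} $\beta$ (not sharp constants), we have room to lose arbitrary amounts of regularity at each iteration, which makes the estimate robust.
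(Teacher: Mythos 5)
Your overall strategy --- rescale to unit scale around a point $x_0$ at distance $\ge\rho$ from $\partial\Omega$, split the operator into a near part controlled by interior smoothness and a tail part controlled by a global bound, then scale back --- is essentially the same as the paper's (which works directly at scale $R=\mathrm{dist}(x_0,\partial\Omega)/2$ with the cutoff decomposition $u=u(x_0)+\bar u+\varphi$). However, there is a genuine gap that makes your bookkeeping come out wrong, hidden by an incorrect scaling exponent.

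First, the scaling exponent in step (iv) is wrong. The operator $(-\Delta)^{s/2}$ has Fourier symbol $|\xi|^{s}$, so it is of order $s$, not $s/2$; with $u_\rho(x)=u(x_0+\rho x)$ one has $(-\Delta)^{s/2}u_\rho(x)=\rho^{s}\,\bigl[(-\Delta)^{s/2}u\bigr](x_0+\rho x)$, i.e.\ the factor you should carry is $\rho^{-s}$, not $\rho^{-s/2}$, and your ``$\rho^{-s/2}\cdot\rho^{s/2}$'' cancellation is spurious. Correcting this, the seminorm identity reads
\[
\bigl[(-\Delta)^{s/2}u\bigr]_{C^\beta(B_{\rho/4}(x_0))}
=\rho^{-\beta}\,\rho^{-s}\,\bigl[(-\Delta)^{s/2}u_\rho\bigr]_{C^\beta(B_{1/4})}.
\]
With the information you carry into step (iii) --- namely $\|u_\rho\|_{L^\infty(\R^n)}=\|u\|_{L^\infty}=O(1)$, $\|u_\rho\|_{C^s(\R^n)}\le\|u\|_{C^s}$, and $\|u_\rho\|_{C^K(B_{1/2})}\le C_K$ --- the tail of the singular integral is controlled only by $\|u_\rho\|_{L^\infty}=O(1)$, so the best you can claim is $[(-\Delta)^{s/2}u_\rho]_{C^\beta(B_{1/4})}\le C$ uniformly in $\rho$. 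Scaling back then gives $C\rho^{-\beta-s}$, which is strictly worse than the claimed $C\rho^{-\beta}$ (and in fact wrong: by Proposition \ref{thlaps/2}, $(-\Delta)^{s/2}u\sim\log\delta$, so the true size of the $C^\beta$ seminorm on $\{\delta\ge\rho\}$ is indeed $\rho^{-\beta}$).

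To obtain the correct $\rho^{-\beta}$ you must show the stronger estimate $[(-\Delta)^{s/2}u_\rho]_{C^\beta(B_{1/4})}\le C\rho^{s}$, and for this you need an ingredient you never invoke: the boundary decay $|u|\le C\,d^{s}$ (a consequence of Theorem \ref{thGrubb}). Combined with $d(y)\le d(x_0)+|y-x_0|$ and the compact support of $u$, this gives $|u_\rho(y)|\le C\rho^{s}(1+|y|)^{s}$ with $u_\rho$ supported in $|y|\lesssim\mathrm{diam}(\Omega)/\rho$, which is exactly the weighted decay needed to make the tail integral $\int_{|y|>1/2}|u_\rho(y)|\,|x-y|^{-n-s-\beta}\,dy$ come out of size $\rho^{s}$ rather than $O(1)$. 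This $d^{s}$ decay is precisely what the paper encodes via the interior estimates $[\bar u]_{C^{\gamma+s}(\R^n)}\le CR^{-\gamma}$, $\gamma\ge -s$ (in particular $\|\bar u\|_{L^\infty}\le CR^{s}$), in its decomposition $u=u(x_0)+\bar u+\varphi$; without it, the scaling argument irretrievably loses a factor $\rho^{-s}$.
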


In the proof of Proposition \ref{proplaps2} we need to compute $(-\Delta)^{s/2}$ of the product $u=d^s v$.
For it, we will use the following identity for $s_0\in(0,1)$
\[(-\Delta)^{s_0}(w_1w_2)=w_1(-\Delta)^{s_0}w_2+w_2(-\Delta)^{s_0}w_1-I_{s_0}(w_1,w_2),\]
where
\begin{equation}\label{Is}
I_{s_0}(w_1,w_2)(x)= c_{n,s_0}\textrm{PV}\int_{\R^n} \frac{\bigl(w_1(x)-w_1(y)\bigr)\bigl(w_2(x)-w_2(y)\bigr)}{|x-y|^{n+2s_0}}\,dy;
\end{equation}
see \cite{RS}.
Another ingredient in the proof of Proposition \ref{proplaps2} is the following result.

\begin{lem}\label{s>2}
Let $\Omega\subset\R^n$ be any bounded smooth domain.
Let $d$ be as in Definition \ref{d}.
Let $k$ be a positive integer, and let $\beta>k$ be a noninteger.
Then,
\[\Delta^k d^\beta = \beta(\beta-1)\cdots(\beta-2k+1)d^{\beta-2k}+h,\]
for some function $h\in C^{\beta-2k+1}(\R^n)$.
\end{lem}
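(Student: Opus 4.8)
The plan is to reduce the statement to a purely local computation near $\partial\Omega$, where $d$ coincides with the distance function $\delta(x)=\mathrm{dist}(x,\partial\Omega)$, and then to exploit the fact that $\delta$ solves the eikonal equation $|\nabla\delta|=1$. Away from $\partial\Omega$ the function $d$ is $C^\infty(\overline\Omega)$ and positive, hence $d^\beta$ is $C^\infty$ there and any difference $\Delta^kd^\beta-\beta(\beta-1)\cdots(\beta-2k+1)d^{\beta-2k}$ is automatically as smooth as we want on that region; so all the content is in a tubular neighborhood $\mathcal N$ of $\partial\Omega$. In $\mathcal N$ we have $d=\delta$ and, crucially, $|\nabla\delta|^2\equiv1$, which upon differentiating gives $\nabla\delta\cdot\nabla(\Delta\delta)$-type relations; more importantly it gives the clean first-order identity $\nabla(\delta^\beta)=\beta\delta^{\beta-1}\nabla\delta$ with $|\nabla\delta|=1$, the engine of the whole computation.

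First I would prove the case $k=1$. A direct computation gives
\[
\Delta(\delta^\beta)=\beta(\beta-1)\delta^{\beta-2}|\nabla\delta|^2+\beta\,\delta^{\beta-1}\Delta\delta=\beta(\beta-1)\delta^{\beta-2}+\beta\,\delta^{\beta-1}\Delta\delta,
\]
using $|\nabla\delta|=1$ in $\mathcal N$. Since $\partial\Omega$ is smooth, $\delta$ is $C^\infty$ in $\mathcal N$ and $\Delta\delta$ is $C^\infty$ there (it is, up to sign, the mean curvature of the level sets), so the remainder term $\beta\,\delta^{\beta-1}\Delta\delta$ is a $C^\infty$ function times $\delta^{\beta-1}$, hence lies in $C^{\beta-1}(\mathcal N)$ because $\delta^{\beta-1}\in C^{\beta-1}$ for noninteger $\beta$ (write $\beta-1=\lfloor\beta-1\rfloor+\{\beta\}$ and note $\delta$ is smooth and nonnegative). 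Patching with a cutoff between $\mathcal N$ and the interior, where everything is $C^\infty$, yields $\Delta d^\beta=\beta(\beta-1)d^{\beta-2}+h$ with $h\in C^{\beta-1}(\R^n)$, matching the claim for $k=1$.

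Then I would run an induction on $k$. Suppose $\Delta^{k-1}d^\beta=\beta(\beta-1)\cdots(\beta-2k+3)\,d^{\beta-2k+2}+h_{k-1}$ with $h_{k-1}\in C^{\beta-2k+3}(\R^n)$. Apply $\Delta$: the main term becomes $\beta(\beta-1)\cdots(\beta-2k+3)\,\Delta(d^{\beta-2k+2})$, and by the $k=1$ computation (applied with exponent $\beta-2k+2$, still noninteger) this equals $\beta(\beta-1)\cdots(\beta-2k+1)\,d^{\beta-2k}$ plus a term of the form (smooth)$\cdot\,d^{\beta-2k+1}\in C^{\beta-2k+1}(\R^n)$; meanwhile $\Delta h_{k-1}\in C^{\beta-2k+1}(\R^n)$ since differentiating twice costs two derivatives of Hölder regularity. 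Collecting the two error contributions gives the desired $h\in C^{\beta-2k+1}(\R^n)$. The one point that needs care is that each intermediate exponent $\beta-2j$ stays noninteger so that the Hölder bookkeeping $d^{\gamma}\in C^{\gamma}$ is valid; this is guaranteed because $\beta$ is noninteger and we only subtract even integers.

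The main obstacle is not the algebra but the regularity bookkeeping at the boundary: one must check that multiplying a $C^\infty$ function by $\delta^{\gamma}$ (with $\gamma$ noninteger, possibly with $\gamma<1$ at the last step when $\beta$ is close to $2k$) genuinely produces a $C^{\gamma}$ function globally on $\R^n$ — in particular across $\partial\Omega$, where $\delta^{\gamma}$ is only Hölder, not smooth — and that the cutoff gluing between the collar $\mathcal N$ and the interior does not destroy this. This is routine once one notes $\delta\equiv d$ near $\partial\Omega$, $\delta$ is smooth and vanishes to first order exactly on $\partial\Omega$ (where $|\nabla\delta|=1\neq0$), and $d>0$ is smooth in the interior, so no extra singularity is introduced by the patching.
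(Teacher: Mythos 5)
Your proof is correct and takes essentially the same route as the paper. The paper's $k=1$ computation uses the geometric decomposition $\Delta\phi = \partial_{\nu\nu}\phi + H_\Gamma\partial_\nu\phi + \Delta_\Gamma\phi$ on level sets of $d$, which is the same calculation as your eikonal-based identity $\Delta(\delta^\beta)=\beta(\beta-1)\delta^{\beta-2}|\nabla\delta|^2+\beta\,\delta^{\beta-1}\Delta\delta$, since $|\nabla\delta|=1$ and $\Delta\delta$ is, up to sign, the mean curvature $H_\Gamma$; the paper then dispatches $k\geq2$ ``immediately by induction,'' which is precisely the induction step and collar--interior patching you spell out.
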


\begin{proof}
We prove the result for $k=1$.
For $k\geq2$, it follows immediately by induction.

For $k=1$, we use the following geometric formula for the Laplacian
\[\Delta \phi(x_0)=\partial_{\nu\nu}\phi(x_0)+H_{\Gamma}(x_0)\partial_\nu\phi(x_0)+\Delta_{\Gamma}\phi(x_0),\]
applied to the level set $\Gamma=\{\phi(x)=\phi(x_0)\}$.
In this formula, $\nu$ is the unit outward tangent to $\Gamma$ at $x_0$, $H_{\Gamma}$ is the mean curvature of $\Gamma$, and $\Delta_\Gamma$ is the Laplace-Beltrami operator on $\Gamma$.
From this formula, we deduce that
\[\Delta \bigl(d^\beta\bigr)=\beta(\beta-1)d^{\beta-2}+H_{\Gamma} d^{\beta-1}.\]
Since the level sets of $d$ are smooth, then $H_{\Gamma}$ is $C^\infty$.
Hence, $h=H_{\Gamma}d^{\beta-1}$ belongs to $C^{\beta-1}$, and thus the result is proved.
\end{proof}

We next give the

\begin{proof}[Proof of Proposition \ref{proplaps2}]
Let $k$ be the integer number for which $s/2=s_0+k$, with $s_0\in(0,1)$.
Since $\Omega$ is $C^\infty$ and $u/d^s$ is $C^\infty(\overline\Omega)$, then there exists a $C^\infty(\R^n)$ extension of $u/d^s|_\Omega$, that we denote by $v$.
Then,
\[\begin{split}
(-\Delta)^{s/2}u&=(-\Delta)^{s/2}\left(v d^s\right)\\
&=(-\Delta)^{s_0}(-\Delta)^k\left(v d^s\right)\\
&=(-\Delta)^{s_0}\left\{d^s(-\Delta)^kv+\cdots+v(-\Delta)^kd^s\right\}\\
&=(-\Delta)^{s_0}\left\{v(-\Delta)^kd^s+h_1\right\},
\end{split}.\]
where $h_1$ is a $C^{s-2k+1}(\R^n)$ function (since $d^s$ is $C^s$ and we differentiate it up to $2k-1$ times).
Since $s-2k=2s_0$, then $h_2=(-\Delta)^{s_0}h_1$ is $C^\alpha(\R^n)$ for all $\alpha<1$.

Thus,
\[(-\Delta)^{s/2}u=(-\Delta)^{s_0}\left\{v(-\Delta)^kd^s\right\}+h_2,\]
with $h_2\in C^\alpha(\R^n)$.
By Lemma \ref{s>2}, we have
\begin{equation}\label{klm}
(-\Delta)^kd^s=c_sd^{2s_0}+h_3
\end{equation}
for some function $h_3\in C^{2s_0+1}$ and some constant $c_s$ that depend only on $s$.
Thus,
\begin{equation}\label{klm2}
(-\Delta)^{s/2}u=c_s(-\Delta)^{s_0}\left\{vd^{2s_0}\right\}+h_4
\end{equation}
for some $h_4\in C^\alpha(\R^n)$.

Now, since $s_0\in(0,1)$, we have
\[(-\Delta)^{s_0}\left\{vd^{2s_0}\right\}=v(-\Delta)^{s_0}d^{2s_0}+d^{2s_0}(-\Delta)^{s_0}v-I_{s_0}(v,d^{2s_0}),\]
where $I_{s_0}$ is given by \eqref{Is}.
Since $v\in C^\infty(\R^n)$ and $d^{2s_0}\in C^{2s_0}(\R^n)$, then the second and the third term belong to $C^\alpha(\R^n)$.
Therefore, we have
\[(-\Delta)^{s_0}\left\{vd^{2s_0}\right\}=v(-\Delta)^{s_0}d^{2s_0}+h_5,\]
where $h_5\in C^\alpha(\R^n)$.
Finally, using \eqref{klm} and \eqref{klm2}, we find
\[(-\Delta)^{s/2}u=v (-\Delta)^{s/2}d^s+h_6\]
for some function $h_6\in C^\alpha(\R^n)$, as desired.
\end{proof}

To prove Proposition \ref{prop:Lap-s/2-delta-s} we will need some lemmas.

Fixed $\rho_0>0$, let $\phi$ be any \emph{bounded} function in $C^{s}(\R)\cap C^{\infty}(\R_+)$ such that
\begin{equation}\label{phi}
\phi(x)= (x_+)^s \qquad \textrm{in}\quad (-\infty,\rho_0).
\end{equation}
This function $\phi$ coincides with the $s$-harmonic function $(x_+)^s$ in a neighborhood of the origin, but $\phi$ is bounded at infinity.
We need to introduce it because the growth at infinity of $(x_+)^s$ prevents us
from computing its $(-\Delta)^{s/2}$.

\begin{lem}\label{claim1} Let $\rho_0>0$, and let $\phi:\R\rightarrow\R$ be given by \eqref{phi}.
Then, we have
\[(-\Delta)^{s/2}\phi(x)= c_1\bigl\{\log |x| + c_2\chi_{(0,\infty)}(x)\bigr\} + h(x)\]
for $x\in(-\rho_0/2,\rho_0/2)$, where $h\in {\rm Lip}([-\rho_0/2,\rho_0/2])$.
The constants $c_1$ and $c_2$ depend only on $s$.
\end{lem}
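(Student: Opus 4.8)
The plan is to compute $(-\Delta)^{s/2}\phi$ on the small interval $(-\rho_0/2,\rho_0/2)$ by isolating the singular part of the kernel. Write $s/2 = s_0+k$ with $s_0\in(0,1)$ and $k$ a nonnegative integer, so that $(-\Delta)^{s/2}\phi = (-\Delta)^{s_0}\bigl((-\Delta)^k\phi\bigr)$. On the interval where $\phi(x)=(x_+)^s$ we have, by the one-dimensional analogue of Lemma \ref{s>2} (indeed simply by differentiating $(x_+)^s$), that $(-\Delta)^k\phi(x) = (-1)^k\,s(s-1)\cdots(s-2k+1)\,(x_+)^{2s_0} + \tilde h(x)$, where $\tilde h$ is compactly supported away from nothing problematic and smooth near the origin on each side; more precisely, since $\phi$ is bounded and $C^\infty$ away from $0$, the function $(-\Delta)^k\phi$ equals $c\,(x_+)^{2s_0}$ plus a function that is $C^\infty$ on $(-\rho_0,\rho_0)$. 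So it suffices to evaluate $(-\Delta)^{s_0}$ of $(x_+)^{2s_0}$ (suitably truncated to be bounded) plus a harmless $C^\infty$ remainder, and a $C^\infty$ remainder contributes something Lipschitz after applying $(-\Delta)^{s_0}$.

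The core computation is then: for a bounded function that coincides with $(x_+)^{2s_0}$ near the origin, show $(-\Delta)^{s_0}$ of it equals $c_1\{\log|x| + c_2\chi_{(0,\infty)}(x)\} + (\text{Lipschitz})$ on $(-\rho_0/2,\rho_0/2)$. I would do this directly from the singular integral
\[
(-\Delta)^{s_0}\psi(x) = c_{1,s_0}\,\textrm{PV}\int_{\R}\frac{\psi(x)-\psi(y)}{|x-y|^{1+2s_0}}\,dy,
\]
splitting the integral into the part over $|y|<\rho_0$ (where $\psi(y)=(y_+)^{2s_0}$) and the part over $|y|\ge\rho_0$ (which, since $\psi$ is bounded and $x$ is near $0$, depends smoothly — in fact real-analytically — on $x$, hence is Lipschitz). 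For the near part, I would use the known fact that $(y_+)^{2s_0}$ is, up to a multiplicative constant, a "half-line power" whose $(-\Delta)^{s_0}$ has an explicit closed form: by the classical formula (going back to computations of Riesz/Dyda, cf. Lemma \ref{computation}'s source \cite{Dyda}), $(-\Delta)^{s_0}(x_+)^{2s_0}$ is a constant on $(0,\infty)$ and vanishes differently on $(-\infty,0)$ — but here the relevant exponent is borderline, and it is exactly this borderline case that produces the logarithm. Concretely I expect that differentiating the one-parameter family $(-\Delta)^{s_0}(x_+)^{a}$ in the exponent $a$ at the critical value $a = 2s_0$, or equivalently expanding the Beta-function constant $B\!\left(\frac{a}{2}-s_0+\tfrac12,\,\dots\right)$ which has a simple pole there, yields the $\log|x|$ term with a coefficient $c_1$ depending only on $s_0$ (hence only on $s$), together with a jump term $c_2\chi_{(0,\infty)}$ coming from the different analytic continuations on the two sides of the origin.

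The main obstacle is the borderline/resonant nature of the exponent $2s_0$: a generic power $(x_+)^a$ with $a\notin\{2s_0, 2s_0+1,2s_0+2,\dots\}$ gives a clean power-type answer for $(-\Delta)^{s_0}$, but at $a=2s_0$ two contributions collide and the finite part is logarithmic. Handling this cleanly — either by an $\epsilon$-regularization $a = 2s_0\pm\epsilon$ and extracting the finite part as $\epsilon\to0$, or by a direct asymptotic analysis of $\textrm{PV}\int_0^{\rho_0}\frac{(x_+)^{2s_0}-(y_+)^{2s_0}}{|x-y|^{1+2s_0}}\,dy$ as $x\to0^\pm$ — requires care to make sure the error terms are genuinely Lipschitz (and not merely $C^\alpha$) on the closed interval $[-\rho_0/2,\rho_0/2]$, and to verify that $c_1,c_2$ come out independent of $\rho_0$ and of the particular bounded extension $\phi$ chosen (which follows because changing the extension changes $\phi$ only outside $(-\rho_0,\rho_0)$, affecting the answer by a Lipschitz — in fact smooth — function near $0$). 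Once the model computation on the half-line is in hand, assembling the statement for $\phi$ is routine bookkeeping of the $C^\infty$ remainders through $(-\Delta)^{s_0}$ and $(-\Delta)^k$.
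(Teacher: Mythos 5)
Your overall strategy coincides with the paper's: reduce $(-\Delta)^{s/2}\phi$ to a one--dimensional model computation of a fractional Laplacian of a truncated half--line power at the borderline exponent, and note that the logarithm comes from a resonance. Concretely, you write $s/2=s_0+k$ with $s_0\in(0,1)$ and push the integer part through $(-\Delta)^k$ to land on $(-\Delta)^{s_0}$ applied to (a bounded modification of) $(x_+)^{2s_0}$; the paper instead writes $s/2=s_0/2+k$ with $s_0\in(0,2)$, reducing to the case $s\in(0,2)$, and then splits that range into $s\in(0,1)$ (handled by citation to the earlier paper) and $s\in(1,2)$ (handled by a direct computation). Up to a shift in bookkeeping these two reductions are the same, and both land on the same core object.

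Where you genuinely diverge is in how you propose to carry out the core computation. The paper normalizes by the change of variables $y=|x|z$, identifies the leading $2\log|x|$ coefficient via L'H\^opital, and then --- crucially for getting a Lipschitz remainder rather than merely a H\"older one --- estimates $J_1'(x)-2/x$ and $J_2'(x)+2/x$ explicitly and shows they have finite limits at $0^\pm$. You instead suggest either an $\epsilon$--regularization of the exponent $a=2s_0\pm\epsilon$ together with finite--part extraction from the pole of the Beta--type constant, or a direct asymptotic analysis of the PV integral. Both can be made to work, and the regularization route in particular gives a clean mechanistic explanation of why the coefficients $c_1,c_2$ depend only on $s$; however, you correctly flag that neither version is carried to completion here. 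The one thing to watch in your route is precisely what you flag at the end: showing the error is Lipschitz (not just $C^\alpha$) requires an estimate on the derivative of the remainder near $x=0$ from each side, which is exactly what the paper's computation of $J_i'(x)\mp 2/x$ accomplishes. If you adopt the $\epsilon$--regularization you would need an analogous derivative estimate on the finite part, which is not automatic from the pole expansion alone; so while the approach is sound, that step is the one place a gap could actually open up if left as a heuristic.
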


\begin{proof}
If $s>2$, then one can write $s/2=s_0/2+k$ to find that the function $\tilde\phi=(-\Delta)^k \phi$ is $C^{\infty}(\R_+)$ and satisfies $\tilde\phi=c_s(x_+)^{s_0}$ in $(-\infty,\rho_0)$ for some constant $c_s$.
Hence, the case $s>2$ follows from the case $s\in(0,2)$.

The case $s\in(0,1)$ was done in \cite[Lemma 3.7]{RS-Poh}.
Thus, from now on we assume $s\in(1,2)$.
Then, we have
\[(-\Delta)^{s/2}\phi(x)=\frac{c_{1,s}}{2}\int_{-\infty}^{+\infty}\frac{2\phi(x)-\phi(x+y)-\phi(x-y)}{|y|^{1+s}}dy.\]
Hence, for $x\in(-\rho_0/2,\rho_0/2)$,
\[(-\Delta)^{s/2}\phi(x)=\frac{c_{1,s}}{2}\int_{x-\rho_0}^{\rho_0-x}\frac{2x_+^s-(x+y)_+^s-(x-y)_+^s}{|y|^{1+s}}dy+\phi_0(x),\]
where $\phi_0$ is a smooth function in $[-\rho_0/2,\rho_0/2]$.

Let us study
\[\begin{split}J(x)&=\int_{x-\rho_0}^{\rho_0-x}\frac{2x_+^s-(x+y)_+^s-(x-y)_+^s}{|y|^{1+s}}dy\\
&=\begin{cases}\displaystyle
\ J_1(x)= \int_{1-\frac{\rho_0}{x}}^{\frac{\rho_0}{x}-1}\frac{2-(1+z)_+^s-(1-z)_+^s}{|z|^{1+s}}\,dz &\mbox{if }x>0\\
&\\
\displaystyle
\ J_2(x)=\int_{-1-\frac{\rho_0}{|x|}}^{\frac{\rho_0}{|x|}+1}\frac{-(-1+z)_+^s-(-1-z)_+^s}{|z|^{1+s}} \,dz&     \mbox{if }x<0.
\end{cases}\end{split}\]

Using L'H\^opital's rule we easily find that
\[\lim_{x\downarrow0} \frac{J_1(x)} {\log |x|}=\lim_{x\uparrow0}
\frac{J_2(x)}{\log |x|}=2.\]
Moreover,
\[\begin{split}
\lim_{x\downarrow0} \left\{ J_1'(x)-\frac{2}{x}\right\}&=
\lim_{x\downarrow0} \left\{ -\frac{\rho_0}{x^2}\frac{2-\left(\frac{\rho_0}{x}\right)^s}{\left(\frac{\rho_0}{x}-1\right)^{1+s}}-
\frac{\rho_0}{x^2}\frac{2-\left(\frac{\rho_0}{x}\right)^s}{\left(\frac{\rho_0}{x}-1\right)^{1+s}}-\frac{2}{x}\right\}\\
&=\frac{2}{\rho_0^s}\lim_{y\downarrow0} \left\{ \frac{1}{y^2}\frac{2-\left(\frac{1}{y}\right)^s}{\left(\frac{1}{y}-1\right)^{1+s}}-\frac{1}{y}\right\} \\
&=\frac{2}{\rho_0^s}\lim_{y\downarrow0} \left\{ \frac{1-2y^s-(1-y)^{1+s}}{y\left(1-y\right)^{1+s}}\right\}=\frac{2(1+s)}{\rho_0^s}.
\end{split}\]
Thus, $J_1(x)-2\log|x|$ is Lipschitz in $[0,\rho_0/2]$.

Analogously,
\[\begin{split}
\lim_{x\uparrow0} \left\{ J_2'(x)+\frac{2}{x}\right\}&=
\lim_{x\uparrow0} \left\{ 2\frac{\rho_0}{x^2}\frac{-\left(-\frac{\rho_0}{x}\right)^s}{\left(-\frac{\rho_0}{x}-1\right)^{1+s}}
+\frac{2}{x}\right\}\\
&=\frac{2}{\rho_0^s}\lim_{y\downarrow0} \left\{ \frac{1}{y^2}\frac{-\left(\frac{1}{y}\right)^s}{\left(\frac{1}{y}-1\right)^{1+s}}+\frac{1}{y}\right\} \\
&=\frac{2}{\rho_0^s}\lim_{y\downarrow0} \left\{ \frac{-1+(1-y)^{1+s}}{y\left(1-y\right)^{1+s}}\right\}=-\frac{2(1+s)}{\rho_0^s}.
\end{split}\]
Thus, the function $J_2(x)-2\log|x|$ is Lipschitz in $[-\rho_0/2,0]$.

Therefore, for some appropriate constant $c$, the function $J(x)-2\log|x|-c\chi_{(0,\infty)}(x)$ is Lipschitz in $[-\rho_0/2,\rho_0/2]$, and the lemma is proved.
\end{proof}

Next lemma will be used to prove Proposition \ref{prop:Lap-s/2-delta-s}.
Before stating it, we need the following

\begin{rem}\label{remrho0}
From now on in this section, $\rho_0>0$ is a small constant depending only on $\Omega$ such that that every point on $\partial\Omega$ can be touched from both inside and outside $\Omega$ by balls of radius $\rho_0$.
A useful observation is that all points $y$ in the segment that joins $x_1$ and $x_2$ ---through $x_0$--- satisfy
$d(y)= |y-x_0|$.
\end{rem}

\begin{lem}\label{claim2}
Assume $s\in(1,2)$.
Let $\Omega$ be a bounded smooth domain, and $\rho_0$ be given by Remark \ref{remrho0}.
Fix $x_0\in\partial\Omega$, and let
\[\phi_{x_0}(x)=\phi\left(-\nu(x_0)\cdot(x-x_0)\right)\]
and
\begin{equation}\label{Sx0}
S_{x_0}=\bigl\{x_0+t\nu(x_0),\ t\in(-\rho_0/2,\rho_0/2)\bigr\},
\end{equation}
where $\phi$ is given by \eqref{phi} and $\nu(x_0)$ is the unit outward normal to $\partial\Omega$ at $x_0$.
Define also $w_{x_0}=d^s-\phi_{x_0}$.

Then, for all $x\in S_{x_0}$,
\[\left|(-\Delta)^{s/2} w_{x_0}(x)-(-\Delta)^{s/2} w_{x_0}(x_0)\right| \le C|x-x_0|^{1-\frac{s}{2}},\]
where $C$ depends only on $\Omega$ and $\rho_0$ (and not on $x_0$).
\end{lem}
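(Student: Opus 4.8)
Since $s\in(1,2)$, the operator $(-\Delta)^{s/2}$ has order $s<2$ and is a singular integral operator with kernel a positive constant times $|x-z|^{-n-s}$, so the plan rests on a precise description of $w_{x_0}=d^s-\phi_{x_0}$ near $x_0$ and along $S_{x_0}$, which I would establish first. Write $\ell(x)=-\nu(x_0)\cdot(x-x_0)$, and let $\widetilde\delta$ denote the \emph{signed} distance to $\partial\Omega$, which is smooth in the $\rho_0$-neighborhood of $\partial\Omega$ and satisfies $d=\widetilde\delta_+$ there; then $w_{x_0}=\widetilde\delta_+^s-\ell_+^s$ on $B_{\rho_0}(x_0)$. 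Since $\widetilde\delta$ and $\ell$ vanish at $x_0$ with the same gradient $-\nu(x_0)$, one has $|\widetilde\delta-\ell|\le C|z-x_0|^2$ and $|\nabla(\widetilde\delta-\ell)|\le C|z-x_0|$ near $x_0$, and a short case analysis over the four sign regions of $(\widetilde\delta,\ell)$ gives
\[|w_{x_0}(z)|\le C|z-x_0|^{s+1}\quad\text{on }B_{\rho_0}(x_0),\qquad |w_{x_0}|\le C\ \text{ on }\R^n\]
(in the regions where one of the two powers drops out one uses $2s>s+1$). Moreover, by the observation in Remark \ref{remrho0} that every $x\in S_{x_0}$ has $x_0$ as its unique nearest boundary point, at each such $x$ one has $\widetilde\delta(x)=\ell(x)$ and $\nabla\widetilde\delta(x)=\nabla\ell(x)$, hence $w_{x_0}\equiv0$ \emph{and} $\nabla w_{x_0}\equiv0$ on $S_{x_0}$; and for $x=x_0+t\nu(x_0)\in S_{x_0}$, $r:=|t|$, the function $w_{x_0}$ is smooth in $B_{r/2}(x)$ with $\|D^2w_{x_0}\|_{L^\infty(B_{r/2}(x))}\le Cr^{s-1}$ (and $w_{x_0}\equiv0$ on $B_{r/2}(x)$ if $t>0$). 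All constants depend only on $\Omega$ and $\rho_0$.

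These facts make both $(-\Delta)^{s/2}w_{x_0}(x_0)$ and $(-\Delta)^{s/2}w_{x_0}(x)$, for $x\in S_{x_0}$, \emph{absolutely convergent} integrals equal to $-c\int_{\R^n}w_{x_0}(z)|x-z|^{-n-s}\,dz$: near $x_0$ the bound $|w_{x_0}(z)|\lesssim|z-x_0|^{s+1}$ beats the kernel singularity since $s+1>s$, and near $x$ the vanishing $w_{x_0}(x)=0$, $\nabla w_{x_0}(x)=0$ together with the $C^2$ bound makes the numerator $O(|z-x|^2)$, which is integrable because $s<2$. Hence, with $r=|x-x_0|$,
\[(-\Delta)^{s/2}w_{x_0}(x)-(-\Delta)^{s/2}w_{x_0}(x_0)=-c\int_{\R^n}w_{x_0}(z)\left(\frac{1}{|x-z|^{n+s}}-\frac{1}{|x_0-z|^{n+s}}\right)dz ,\]
and I would estimate this by splitting the domain into $B_{2r}(x_0)$ and its complement.

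On the complement one has $|x-z|\ge\tfrac12|z-x_0|$, so by the mean value theorem the kernel difference is $\le C\,r\,|z-x_0|^{-n-s-1}$; integrating it against $|w_{x_0}(z)|\le C\min(|z-x_0|^{s+1},1)$ produces a bound $Cr\log(1/r)$, the logarithm coming from the annulus $2r<|z-x_0|<\rho_0$ where the integrand is $\sim|z-x_0|^{-1}$. On $B_{2r}(x_0)$, the term carrying $|x_0-z|^{-n-s}$ is bounded by $C\int_{B_{2r}(x_0)}|z-x_0|^{s+1-n-s}dz\le Cr$; for the term carrying $|x-z|^{-n-s}$, I would split $B_{2r}(x_0)$ into $B_{r/2}(x)$ — where $|w_{x_0}(z)|\le Cr^{s-1}|z-x|^2$ and $\int_{B_{r/2}(x)}r^{s-1}|z-x|^{2-n-s}dz\le Cr$ (here $s<2$ is used) — and the rest, where $|x-z|\ge r/2$ and $\int_{B_{2r}(x_0)}|w_{x_0}|\le Cr^{\,n+s+1}$, again giving $Cr$. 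Collecting,
\[\left|(-\Delta)^{s/2}w_{x_0}(x)-(-\Delta)^{s/2}w_{x_0}(x_0)\right|\le Cr\log(1/r)\le C\,|x-x_0|^{1-\frac s2},\]
the last step since $r^{s/2}\log(1/r)$ is bounded for $r\in(0,\rho_0/2]$; this is the claim, with $C=C(\Omega,\rho_0)$.

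I expect the main obstacle to be the verification of the structural facts of the first paragraph — in essence, that subtracting the tangent-plane model $\phi_{x_0}$ cancels not only the leading $d^s$ behavior of $d^s$ at $x_0$ but also its next-order correction, so that $w_{x_0}$ vanishes to order $s+1$ at $x_0$ and has the stated scale-invariant interior bounds along $S_{x_0}$, all uniformly in $x_0$. This is elementary but must be done carefully using the regularity of the signed distance function and the uniform interior and exterior ball condition of Remark \ref{remrho0}, which is precisely what keeps the constants independent of $x_0$. Once it is available, the fractional-Laplacian estimate is a routine, if somewhat delicate (two poles $x,x_0$, cancellations near $z=x$), splitting argument. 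Finally, the exponent $1-\tfrac s2$ is not optimal — the argument really yields the modulus $r\log(1/r)$ — but it is all that is needed in the proof of Proposition \ref{prop:Lap-s/2-delta-s}.
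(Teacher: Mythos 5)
Your proof is correct, and it reaches the stated bound --- in fact a slightly stronger one --- but by a genuinely different decomposition from the paper's. The paper writes the difference as $-c\int_{\R^n}\bigl(w_{x_0}(x+y)-w_{x_0}(x_0+y)\bigr)|y|^{-n-s}\,dy$ (legitimate since $w_{x_0}$ vanishes on $S_{x_0}$) and splits in $|y|\lessgtr r$: on $B_r$ it uses only the quadratic bound $|w_{x_0}(x+y)|\le C|y|^2$, valid at every $x\in S_{x_0}$, yielding $Cr^{2-s}$; on the complement it uses only that $w_{x_0}$ is globally Lipschitz and bounded, yielding $C|x-x_0|\,r^{-s}$; the choice $r=|x-x_0|^{1/2}$ then gives $C|x-x_0|^{1-s/2}$. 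You instead keep $w_{x_0}(z)$ fixed and move the dependence on $x,x_0$ into the kernel difference $|x-z|^{-n-s}-|x_0-z|^{-n-s}$, splitting at $|z-x_0|\lessgtr 2r$. This requires sharper structural input --- the bound $|w_{x_0}(z)|\le C|z-x_0|^{s+1}$ anchored at $x_0$, the full first-order vanishing of $w_{x_0}$ along $S_{x_0}$, and the interior Hessian estimate $\|D^2 w_{x_0}\|_{L^\infty(B_{r/2}(x))}\le Cr^{s-1}$ --- but in exchange it produces the better modulus $Cr\log(1/r)$, as you note. The paper's version is shorter and demands less precise information about $w_{x_0}$; yours costs more preparation but is quantitatively tighter. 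Both give the claimed estimate with constants depending only on $\Omega$ and $\rho_0$.
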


\begin{proof}
We follow \cite[Lemma 3.9]{RS-Poh}, where a very similar result was proved for the case $s\in(0,1)$.

We denote $w=w_{x_0}$.
Note that, along $S_{x_0}$, the distance to $\partial\Omega$ agrees with the distance to the tangent plane to $\partial\Omega$ at $x_0$; see Remark \ref{remrho0}.
That is, denoting $d_\pm=(\chi_\Omega-\chi_{\R^n\backslash \Omega})d$ and $\tilde\delta(x)=-\nu(x_0)\cdot(x-x_0)$, we have $d_\pm(x)=\tilde\delta(x)$ for all $x\in S_{x_0}$.
Moreover, the gradients of these two functions also coincide on $S_{x_0}$, i.e., $\nabla d_\pm(x)=-\nu(x_0)=\nabla \tilde\delta(x)$ for all $x\in S_{x_0}$.

Therefore, for all $x\in S_{x_0}$ and $y\in B_{\rho_0/2}(0)$, we have
\[|d_{\pm}(x+y)-\tilde\delta(x+y)|\le C|y|^2\]
for some $C$ depending only on $\rho_0$.
Thus, since $s>1$, for all $x\in S_{x_0}$ and $y\in B_{\rho_0/2}(0)$ we have
\begin{equation}\label{w(x+y)}|w(x+y)|= |(d_\pm(x+y))_+^s-(\tilde\delta(x+y))_+^s| \le C|y|^2,\end{equation}
where $C$ is a constant depending on $\Omega$ and $s$.

On the other hand, since $w$ is Lipschitz and bounded,
\begin{equation}\label{w(x+y)2}|w(x+y)-w(x_0+y)|\leq C|x-x_0|.\end{equation}

Finally, let $r<\rho_0/2$ to be chosen later.
For each $x\in S_{x_0}$ we have
\[
\begin{split}
\bigl|(-\Delta)^{s/2} &w(x)-(-\Delta)^{s/2} w(x_0)\bigr|
\le C\int_{\R^n} \frac{|w(x+y)-w(x_0+y)|}{|y|^{n+s}}\,dy\\
&\le C\int_{B_r}\frac{|w(x+y)-w(x_0+y)|}{|y|^{n+s}}\,dy
+C\int_{\R^n\setminus B_r}\frac{|w(x+y)-w(x_0+y)|}{|y|^{n+s}}\,dy\\
&\le C\int_{B_r}\frac{|y|^2}{|y|^{n+s}}\,dy+
C\int_{\R^n\setminus B_r}\frac{|x-x_0|}{|y|^{n+s}}\,dy\\
&= C\left(r^{2-s} +|x-x_0|r^{-s}\right)\,,
\end{split}
\]
where we have used \eqref{w(x+y)} and \eqref{w(x+y)2}.
Taking $r=|x-x_0|^{1/2}$ the lemma is proved.
\end{proof}

To prove Proposition \ref{prop:Lap-s/2-delta-s} we will use the following.

\begin{claim}[\cite{RS-Poh}]\label{claimMMM}
Let $\Omega$ be a bounded $C^{1,1}$ domain, and $\rho_0$ be given by Remark \ref{remrho0}.
Let $\delta(x)=\textrm{dist}(x,\partial\Omega)$.
Let $w$ be a function satisfying, for some $K>0$,
\begin{itemize}
\item[(i)] $w$ is locally Lipschitz in $\{x\in\R^n \,:\,0<\delta(x)<\rho_0\}$ and
\[|\nabla w(x)| \le K\delta(x)^{-M}\ \mbox{ in }\ \{x\in\R^n \,:\,0<\delta(x)<\rho_0\}\]
for some $M >0$.
\item[(ii)] There exists $\alpha>0$ such that
\[|w(x)-w(x^*)|\le K\delta(x)^\alpha \ \mbox{ in }\ \{x\in\R^n \,:\,0<\delta(x)<\rho_0\},\]
where $x^*$ is the unique point on $\partial \Omega$ satisfying $\delta(x)= |x-x^*|$.
\item[(iii)] For the same $\alpha$, it holds
\[
\|w\|_{C^{\alpha}\left( {\{\delta\geq\rho_0\}}\right)}\le K.
\]
\end{itemize}

Then, there exists $\gamma>0$, depending only on $\alpha$ and $M$, such that
\begin{equation}\label{claimMMMeq}
\|w\|_{C^{\gamma}(\R^n)}\le C K,
\end{equation}
where $C$ depends only on $\Omega$.
\end{claim}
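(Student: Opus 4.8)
The plan is to prove the estimate by bounding $|w(x)-w(y)|$ directly by $C|x-y|^\gamma$ for all $x,y\in\R^n$. Normalize $K=1$ (the hypotheses being homogeneous of degree one in $w$, the general constant is recovered at the end), and note we may assume $M\ge1$: replacing $M$ by $\max\{M,1\}$ only enlarges the constant, since $\delta<\rho_0$ near $\partial\Omega$. A first reduction is that $\|w\|_{L^\infty(\R^n)}\le C$: on $\{\delta\ge\rho_0\}$ this is (iii), and for $x$ with $0<\delta(x)<\rho_0$ one slides along the inward normal segment to the nearest boundary point $x^*$, compares $w(x)$ with $w(x^*)$ via (ii), and controls $w(x^*)$ by continuing the segment up to the level set $\{\delta=\rho_0\}$ and using (iii). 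This settles the case $|x-y|\ge\rho_0/4$, so from now on $r:=|x-y|<\rho_0/4$; moreover, by a routine reduction (inserting, if necessary, the point of the segment $[x,y]$ lying on $\{\delta=\rho_0/2\}$, and using (i) and (iii) between it and an endpoint) we may also assume $\delta(x),\delta(y)\le\rho_0/2$.

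The crucial preliminary step is to show that $w|_{\partial\Omega}\in C^{\alpha_1}(\partial\Omega)$ with norm $\le C$, where $\alpha_1:=\alpha/(\alpha+M)$. Given $p,q\in\partial\Omega$ with $\ell:=|p-q|$ small, I would slide $p$ and $q$ outward along their normals --- these are Lipschitz, since $\Omega$ is $C^{1,1}$, and the two-sided ball condition of Remark \ref{remrho0} guarantees the foot-point property --- first to level $\ell$ and then to level $\lambda\in(2\ell,\rho_0)$. By (ii) each normal displacement changes $w$ by at most $O(\lambda^\alpha)$; and at level $\lambda$ the two resulting points lie within distance $O(\ell)$ of one another inside $\{\lambda/2\le\delta\le2\lambda\}$, so the gradient bound (i) controls the oscillation of $w$ between them by $C\lambda^{-M}\ell$. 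Hence $|w(p)-w(q)|\le C(\lambda^\alpha+\lambda^{-M}\ell)$, and the choice $\lambda=\ell^{1/(\alpha+M)}$ gives $|w(p)-w(q)|\le C\ell^{\alpha_1}$. Combined with the $L^\infty$ bound when $|p-q|$ is not small, this proves the claim.

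It remains to treat $r<\rho_0/4$ with $\delta(x),\delta(y)\le\rho_0/2$. Put $d:=\max\{\delta(x),\delta(y)\}$, say $d=\delta(x)$. If $d\le2r$, then $|x^*-y^*|\le d+r+\delta(y)\le5r$, so (ii) and the boundary Hölder bound give $|w(x)-w(y)|\le\delta(x)^\alpha+\delta(y)^\alpha+C|x^*-y^*|^{\alpha_1}\le Cr^{\alpha_1}$. If $d>2r$, the segment $[x,y]$ stays in $\{d/2\le\delta<\rho_0\}$, so (i) yields the interior estimate $|w(x)-w(y)|\le Crd^{-M}$, while the triangle inequality through $x^*,y^*$ together with (ii) and the boundary Hölder bound yields $|w(x)-w(y)|\le Cd^{\alpha_1}$; interpolating --- raising the first inequality to the power $\alpha_1/M$ and multiplying by the second --- eliminates $d$ and gives $|w(x)-w(y)|\le Cr^{\alpha_1/(M+\alpha_1)}$. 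Taking $\gamma:=\alpha_1/(M+\alpha_1)$, which is at most $\alpha_1$ and depends only on $\alpha$ and $M$, completes the argument.

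The main obstacle --- and the reason the two-estimate interpolation is unavoidable --- is that the gradient bound (i) is worthless on its own once $M\ge1$: integrating $|\nabla w|\le\delta^{-M}$ across a strip of width comparable to $\delta$ only controls the oscillation of $w$ by $O(\delta^{1-M})$, which does not tend to $0$. The whole idea is to trade this divergent normal control for the bounded, Hölder control near $\partial\Omega$ provided by (ii) and (iii), first to obtain $C^{\alpha_1}$ regularity on $\partial\Omega$ and then in the final interpolation. Everything else --- the ball and normal geometry, the lengths of the connecting paths, and the legitimacy of the preliminary reductions --- is routine and parallels the proof of the corresponding statement for $s\in(0,1)$ in \cite{RS-Poh}.
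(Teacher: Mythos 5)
Your argument is correct and follows essentially the same strategy as the proof in \cite{RS-Poh} (which the present paper cites without reproducing): derive a $C^{\alpha_1}$ bound for $w$ on $\partial\Omega$ with $\alpha_1=\alpha/(\alpha+M)$ by sliding along normals and optimizing the level $\lambda$ against the competing bounds (i) and (ii), then interpolate that boundary estimate with the interior gradient bound $|w(x)-w(y)|\le Cr\,d^{-M}$ to eliminate $d$ and obtain $\gamma=\alpha_1/(M+\alpha_1)$. The preliminary reductions (normalizing $K$, taking $M\ge1$, the $L^\infty$ bound, and restricting to $\delta(x),\delta(y)\le\rho_0/2$) are handled correctly, as is the use of the 1-Lipschitz property of $\delta$ to keep the segment $[x,y]$ inside $\{\delta>d/2\}$ when $d>2r$.
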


Finally, we give the

\begin{proof}[Proof of Proposition \ref{prop:Lap-s/2-delta-s}]
For $s>2$, write $s/2=s_1/2+k$, with $k\in \mathbb Z$ and $s_1\in(0,2)$, and use Lemma \ref{s>2}.
We find $(-\Delta)^{s/2} d^s=c(-\Delta)^{s_1/2}d^{s_1}+h_0$, where $h_0\in  C^\alpha(\R^n)$.
Hence, we may assume $s\in(0,2)$ in the rest of the proof.
Moreover, the case $s\in(0,1)$ was done in \cite[Proposition 3.2]{RS-Poh}.
Thus, from now on we assume $s\in(1,2)$.

For $s\in(1,2)$, we will use the previous results and Claim \ref{claimMMM} to prove the result.
Define
\[h(x)= (-\Delta)^{s/2}d^s(x)-c_1\left\{\log^- \delta(x)+c_2\chi_\Omega(x)\right\},\]
where $c_1$ and $c_2$ are given by Lemma \ref{claim1}.

On one hand, by Lemma \ref{claim1}, for all $x_0\in\partial\Omega$ and for all $x\in S_{x_0}$, where $S_{x_0}$ is defined by \eqref{Sx0}, we have
\[h (x) = (-\Delta)^{s/2}d^s(x)- (-\Delta)^{s/2} \phi_{x_0}(x) + \tilde h\bigl(\nu(x_0)\cdot(x-x_0)\bigr),\]
where $\tilde h$ is the $\textrm{Lip}([-\rho_0/2,\rho_0/2])$ function given by Lemma \ref{claim1}.
Hence, using Lemma \ref{claim2}, we find
\[|h(x)-h(x_0)|\le C|x-x_0|^{\alpha}\quad \mbox{for all }x\in S_{x_0}\]
for some constant independent of $x_0$.

Recall that for all $x\in S_{x_0}$ we have $x^*=x_0$, where $x^*$ is the unique point on $\partial \Omega$ satisfying $d(x)= |x-x^*|$.
Hence,
\begin{equation}\label{eq:pf-prop-calpha-bound1}
|h(x)-h(x^*)|\le C|x-x^*|^{\alpha}\quad \mbox{ for all }x\in\{d(x)<\rho_0/2\}\,.
\end{equation}

Moreover, since $d^s$ is $C^\infty$ inside $\Omega$, then
\begin{equation}\label{eq:pf-prop-bound-faraway}
\|h\|_{C^{\alpha}(\{\delta\ge\rho_0/2\})} \le C.
\end{equation}

Now, if $0<\delta(x)<\rho_0/2$, then
\begin{equation}\label{eq:pf-prop-grad-bound}
|\nabla h(x)|\le |\nabla(-\Delta)^{s/2}d^s(x)| + c_1|d(x)|^{-1}\le C|\delta(x)|^{-1}.
\end{equation}
The last inequality follows from Proposition \ref{prop12345} (with $\beta=1$), since $(-\Delta)^sd^s(x)$ is smooth (by Theorem \ref{thGrubb}).

Therefore, using \eqref{eq:pf-prop-calpha-bound1}, \eqref{eq:pf-prop-bound-faraway}, and \eqref{eq:pf-prop-grad-bound} and Claim \ref{claimMMM}, we find that $h\in C^\alpha(\R^n)$ for some small $\alpha>0$, as desired.
\end{proof}

We next prove Proposition \ref{prop12345}.

\begin{proof}[Proof of Proposition \ref{prop12345}]
First, clearly we may assume that $s<2$ ---otherwise, we apply the result to $-\Delta u$.
Since the case $s\in(0,1)$ was done in \cite[Lemma 4.3]{RS-Poh}, from now on we assume that $s\in(1,2)$.

Note that, since $(-\Delta)^su$ is smooth, then by Theorem \ref{thGrubb} $u/d^s$ is smooth in $\overline \Omega$, and thus $u$ satisfies
\[[u]_{C^{\gamma+s}(\{\textrm{dist}(x,\partial\Omega)\geq \rho\})}\leq C\rho^{-\gamma}\]
for all $\gamma\geq-s$.

Take $x_0\in\Omega$, and define $R>0$ so that $\textrm{dist}(x_0,\partial\Omega)=2R$.
We will prove that
\[[(-\Delta)^{s/2}u]_{C^\beta(B_{R/2}(x_0))}\leq CR^{-\beta},\]
which yields the desired result (see \cite{RS} for more details).

As in the proof of Lemma 4.3 in \cite{RS}, we may write $u=u(x_0)+\bar u+\varphi$, where $\bar u$ and $\varphi$ satisfy
\[\varphi\equiv0\quad\textrm{in}\ B_R(x_0),\qquad \bar u\equiv0\quad\textrm{outside}\ B_{3R/2}(x_0),\]
and
\begin{equation}\label{cotes-interiors}
[\bar u]_{C^{\gamma+s}(\R^n)}\leq CR^{-\gamma}\quad \textrm{for all}\ \gamma\geq -s.
\end{equation}
As in \cite[Lemma 4.3]{RS}, using the regularity of the kernel of the fractional Laplacian, that $\varphi\equiv0$ in $B_R(x_0)$, we find that
\[[(-\Delta)^{s/2}\varphi]_{C^\beta(B_{R/2}(x_0))}\leq CR^{-\beta}.\]
Thus, we only have to show that for all $x_1$ and $x_2$ in $B_{R/2}(x_0)$,
\begin{equation}\label{we-have-to}
\left|D^k(-\Delta)^{s/2}\bar u(x_1)-D^k(-\Delta)^{s/2}\bar u(x_2)\right|\leq C|x_1-x_2|^{\beta'}R^{-k-\beta'},
\end{equation}
where $\beta=\beta'+k$, with $k$ integer and $\beta'\in(0,1]$.
Here, $D^k$ denotes any $k$-th order derivative.

We prove \eqref{we-have-to} for the case $\beta'=1$, since the other ones clearly follow from it (using $|x_1-x_2|\leq R$).
Denote
\[w=D^k\bar u\quad \textrm{and}\quad \delta^2w(x,y)=\frac{1}{2}\bigl(2w(x)-w(x+y)-w(x-y)\bigr).\]
Using that $w$ has support in $B_{3R/2}(x_0)$, and also \eqref{cotes-interiors} with $\gamma+s=k+3$ and with $\gamma+s=k+1$, we find that for all $x\in B_{R/2}(x_0)$
\[\begin{split}
|\nabla(-\Delta)^{s/2}w(x)|&\leq C\int_{B_{2R}(x_0)}|\delta^2\nabla w(x,y)|\frac{dy}{|y|^{n+s}}+C\int_{\R^n\setminus B_{2R}(x_0)}|\nabla w(x)|\frac{dy}{|y|^{n+s}}\\
&\leq C\int_{B_{2R}(x_0)}C|y|^2R^{s-k-3}\frac{dy}{|y|^{n+s}}+C\int_{\R^n\setminus B_{2R}(x_0)}R^{-k-1+s}\frac{dy}{|y|^{n+s}}\\
&\leq CR^{-k-1}.\end{split}\]
Thus, for all $x_1$ and $x_2$ in $B_{R/2}(x_0)$,
\[\left|(-\Delta)^{s/2}w(x_1)-(-\Delta)^{s/2}w(x_2)\right|\leq C|x_1-x_2|R^{-k-1}.\]
Therefore, \eqref{we-have-to} follows, and the Proposition is proved.
\end{proof}

Finally, we give the

\begin{proof}[Proof of Proposition \ref{thlaps/2}]
It immediately follows from Propositions \ref{proplaps2}, \ref{prop:Lap-s/2-delta-s}, and Lemma \ref{prop12345}.
\end{proof}

\section{Case I: Star-shaped domains}
\label{sec2}

In this section we prove Theorem \ref{intparts} for strictly star-shaped domains and for $(-\Delta)^su$ smooth in $\overline\Omega$.
Recall that $\Omega$ is said to be strictly star-shaped if, for some $z_0\in\R^n$,
\begin{equation}\label{starshaped}
(x-z_0)\cdot\nu>0\qquad \textrm{for all}\ \ x\in\partial\Omega.
\end{equation}

A key tool in this proof is Proposition 1.11 of \cite{RS-Poh}, which is stated next.

\begin{prop}[\cite{RS-Poh}]\label{propoperador}
Let $A$ and $B$ be real numbers, and
\[\varphi(t)=A\log^-|t-1|+B\chi_{[0,1]}(t)+h(t),\]
where $\log^-t=\min\{\log t,0\}$ and $h$ is a function satisfying, for some constants $\alpha$ and $\gamma$ in $(0,1)$, and $C_0>0$, the following conditions:
\begin{itemize}
\item[(i)] $\|h\|_{C^{\alpha}([0,\infty))}\leq C_0$.
\item[(ii)] For all $\beta\in[\gamma,1+\gamma]$
\[\|h\|_{C^{\beta}((0,1-\rho)\cup(1+\rho,2))}\leq C_0 \rho^{-\beta}\qquad \textrm{for all}\ \  \rho\in(0,1).\]
\item[(iii)] $|h'(t)|\leq C_0 t^{-2-\gamma}$ and $|h''(t)|\leq C_0 t^{-3-\gamma}$ for all $t>2$.
\end{itemize}
Then,
\[-\left.\frac{d}{d\lambda}\right|_{\lambda=1^+}\int_{0}^{\infty} \varphi\left(\lambda t\right)\varphi\left(\frac{t}{\lambda}\right)dt=A^2\pi^2+B^2.\]

Moreover,  the limit defining this derivative is uniform among functions $\varphi$ satisfying (i)-(ii)-(iii) with given constants $C_0$, $\alpha$, and $\gamma$.
\end{prop}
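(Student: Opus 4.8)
The plan is to isolate the two singular building blocks of $\varphi$ and show that only they contribute to the one–sided derivative. Write
\[
\varphi = A\,\phi_1 + B\,\phi_2 + h, \qquad \phi_1(t)=\log^-|t-1|,\quad \phi_2(t)=\chi_{[0,1]}(t).
\]
Expanding $\varphi(\lambda t)\varphi(t/\lambda)$ and using linearity of $\frac{d}{d\lambda}$, the quantity to compute splits into the three ``pure'' contributions $A^2\phi_1(\lambda t)\phi_1(t/\lambda)$, $B^2\phi_2(\lambda t)\phi_2(t/\lambda)$, $AB[\phi_1(\lambda t)\phi_2(t/\lambda)+\phi_2(\lambda t)\phi_1(t/\lambda)]$, plus mixed terms carrying a factor $h$. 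The first step is to show that \emph{every term containing $h$ contributes $0$}. For $h(\lambda t)h(t/\lambda)$ this rests on the elementary fact that $\frac{d}{d\lambda}\big|_{\lambda=1}\int_0^\infty w(\lambda t)w(t/\lambda)\,dt=0$ for $w\in C^1$ with suitable decay (differentiate under the integral and use antisymmetry of the integrand); hypothesis (iii) handles the behaviour at infinity, while (i)–(ii) are used to cut off a fixed neighbourhood of $t=1$ and show that the remaining ``boundary layer'', where $h$ is merely $C^\alpha$ with $|h'(t)|\lesssim|t-1|^{-1}$, contributes nothing as $\lambda\to1^+$ — intuitively because $h$ is \emph{continuous} at $t=1$, so it carries neither a jump nor a logarithmic coefficient there. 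For the mixed terms $\phi_i(\lambda t)h(t/\lambda)$ and $h(\lambda t)\phi_i(t/\lambda)$ one splits the $t$–integral at $|t-1|=\rho_0$: away from $t=1$ both factors are $C^1$ with the controlled blow–up of (ii)–(iii) and one argues as above, while near $t=1$ the singular factor is integrated against a function continuous at $1$ and a direct estimate of the $\lambda$–difference quotient gives a vanishing limit. All constants here depend only on $C_0,\alpha,\gamma$, which yields the asserted uniformity.

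The second step is the three explicit contributions. The $\phi_2$–$\phi_2$ term is immediate: $\int_0^\infty\chi_{[0,1]}(\lambda t)\chi_{[0,1]}(t/\lambda)\,dt=\min\{\lambda^{-1},\lambda\}$, so $-\frac{d}{d\lambda}\big|_{\lambda=1^+}=1$, giving the summand $B^2$. The cross term is also explicit: for $\lambda>1$ near $1$,
\[
\int_0^\infty\phi_1(\lambda t)\phi_2(t/\lambda)\,dt=\frac1\lambda\int_0^{\lambda^2}\log|r-1|\,dr,\qquad \int_0^\infty\phi_2(\lambda t)\phi_1(t/\lambda)\,dt=\lambda\int_0^{\lambda^{-2}}\log(1-r)\,dr,
\]
and adding the two, the $(\lambda^2-1)\log(\lambda^2-1)$ and $(1-\lambda^{-2})\log(1-\lambda^{-2})$ pieces cancel, leaving $-\lambda-\lambda^{-1}+2(\lambda-\lambda^{-1})\log\lambda$, whose derivative at $\lambda=1$ is $0$; hence the cross term contributes nothing.

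The core is the $\phi_1$–$\phi_1$ term. Using $|at-1|=a|t-1/a|$ and the substitution $t=s/\lambda$ one gets, for $\lambda>1$,
\[
\int_0^\infty\phi_1(\lambda t)\phi_1(t/\lambda)\,dt=\frac1\lambda\int_0^2\log|s-1|\,\log|s-\lambda^2|\,ds+\frac{4\log\lambda}{\lambda},
\]
since $\int_0^2\log|s-1|\,ds=-2$. Setting $\Phi(\mu)=\int_0^2\log|s-1|\log|s-\mu|\,ds$ (so $\Phi(1)=4$), this reduces everything to proving $\Phi'(1^+)=-\pi^2/2$, which then gives $-\frac{d}{d\lambda}\big|_{\lambda=1^+}\int_0^\infty\phi_1(\lambda t)\phi_1(t/\lambda)\,dt=-2\Phi'(1^+)=\pi^2$, i.e. the summand $A^2\pi^2$. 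To evaluate $\Phi'(1^+)$ I would substitute $u=s-1$, split at $u=0$, rescale $u=\eta w$ with $\eta=\mu-1>0$, and expand as $\eta\to0^+$: the divergent $\eta(\log\eta)^2$ and $\eta\log\eta$ contributions cancel between the two halves, and the surviving linear coefficient equals $\int_0^\infty\log w\,\log|1-w^{-2}|\,dw$. This integral is then computed by splitting at $w=1$, substituting $w\mapsto1/w$ on $(1,\infty)$, expanding $\log(1-w^2)=-\sum_{k\ge1}w^{2k}/k$, and summing the series $\sum_{k\ge1}\frac{1}{k(2k\pm1)^2}$ by partial fractions together with $\sum_{k\ge1}(2k-1)^{-2}=\sum_{k\ge1}(2k+1)^{-2}+1=\pi^2/8$; the value is exactly $-\pi^2/2$.

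\textbf{Main obstacle.} The delicate point is precisely this last computation. One cannot differentiate $\Phi$ under the integral sign at $\mu=1$: the naive derivative is a non–absolutely–convergent principal value equal to $0$, which does \emph{not} equal $\Phi'(1^+)$. The true value $-\pi^2/2$ emerges only from the cancellation of the divergent $\eta(\log\eta)^2$ and $\eta\log\eta$ terms, leaving a finite linear coefficient whose evaluation requires the explicit series summation above. A secondary technical nuisance is the first step: since $h$ is only $C^\alpha$ across $t=1$ with a derivative that may blow up like $|t-1|^{-1}$, the vanishing of its contribution must be extracted from the structural hypotheses (ii)–(iii) rather than from differentiation under the integral.
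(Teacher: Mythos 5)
This proposition is cited from \cite{RS-Poh} (as Proposition 1.11 there) and the present paper does not reprove it, so there is no in-paper argument for me to align your attempt with; I therefore assess the attempt on its own terms.

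Your three explicit computations are correct. The $\phi_2$--$\phi_2$ piece gives $\min\{\lambda,\lambda^{-1}\}$ with one-sided derivative $-1$, hence the summand $B^2$. For the cross term, the sum
\[
(\lambda-\lambda^{-1})\log(\lambda^2-1)-\lambda-\lambda^{-1}-(\lambda-\lambda^{-1})\log(1-\lambda^{-2})
= -\lambda-\lambda^{-1}+2(\lambda-\lambda^{-1})\log\lambda
\]
indeed has vanishing derivative at $\lambda=1$. For the $\phi_1$--$\phi_1$ piece, the reduction to $\Phi'(1^+)$ with the correction $4\lambda^{-1}\log\lambda$ is right, and after combining the two halves via $\log(\eta+v)+\log|v-\eta|=\log|v^2-\eta^2|$ one gets cleanly
\[
\Phi(1+\eta)-\Phi(1)=\eta\int_0^{1/\eta}\log w\,\log\bigl|1-w^{-2}\bigr|\,dw+\eta\log\eta\int_0^{1/\eta}\log|1-w^{-2}|\,dw ,
\]
and since $\int_0^\infty\log|1-w^{-2}|\,dw=0$ the last term is $o(\eta)$; the series evaluation $\int_0^\infty\log w\,\log|1-w^{-2}|\,dw=-\pi^2/2$ checks out, giving $-2\Phi'(1^+)=\pi^2$. (A small point: if one rescales the two halves separately as you describe, the divergent pieces that must cancel are $(\log\eta)^2$ and $\log\eta$, not $\eta(\log\eta)^2$ and $\eta\log\eta$; grouping the halves first avoids tracking them.)

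The genuine gap is the step you label a ``secondary technical nuisance'': proving that every term carrying $h$ has vanishing one-sided derivative. This is in fact where the work of the proposition lies, and your sketch does not resolve it. Near $t=1$, $h$ is only $C^\alpha$ and (ii) allows $|h'(t)|\sim|t-1|^{-1}$, so the antisymmetry identity $\partial_\lambda\bigl[h(\lambda t)h(t/\lambda)\bigr]\big|_{\lambda=1}=0$ cannot be used across the boundary layer $|t-1|\lesssim\lambda-1$; there the crude $C^\alpha$ estimate on increments only bounds the difference quotient by $C(\lambda-1)^{\alpha-1}$ pointwise, which does \emph{not} vanish. One must choose the boundary-layer width as a function of $\lambda-1$ and interpolate the competing bounds coming from (i), from (ii) with $\beta$ ranging over $[\gamma,1+\gamma]$, and from (iii), so that the region where only H\"older control is available is small enough for its contribution to the difference quotient to vanish. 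The same issue arises for the mixed terms $\phi_i(\lambda t)h(t/\lambda)+h(\lambda t)\phi_i(t/\lambda)$, where in addition $\phi_1$ is unbounded. Your appeal to ``$h$ is continuous at $t=1$, so it carries neither a jump nor a logarithmic coefficient'' is the right heuristic, but as written it is not an argument, and precisely here is why hypotheses (i)--(iii) are formulated the way they are. Filling this in would make your proposal a complete proof; without it, the central analytic claim of the proposition is assumed rather than shown.
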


We proceed now with the proof of Theorem \ref{intparts}.

\begin{proof}[Proof of Theorem \ref{intparts} for strictly star-shaped domains and for $(-\Delta)^su\in C^\infty(\overline\Omega)$]
As explained in the Introduction, we follow the proof of Proposition 1.6 in \cite{RS-Poh}.

Recall that we are assuming $\Omega$ to be strictly star shaped.
We may assume without loss of generality that $\Omega$ is strictly star-shaped with respect to the origin, that is, $z_0=0$ in \eqref{starshaped}.
Indeed, if $\Omega$ is strictly star-shaped with respect to a point $z_0\neq0$, then the result follows from the case $z_0=0$, as in \cite[Proposition 1.6]{RS-Poh}.

By Theorem \ref{thGrubb}, we have that $u\in C^s(\R^n)$, and thus in particular $u\in C^1(\R^n)$.
Thus, by the dominated convergence theorem,
\begin{equation}\label{first}
\int_\Omega(x\cdot \nabla u) (-\Delta)^su\, dx=\left.\frac{d}{d\lambda}\right|_{\lambda=1^+}\int_\Omega u_\lambda (-\Delta)^su\, dx,
\end{equation}
where $\left.\frac{d}{d\lambda}\right|_{\lambda=1^+}$ is the derivative from the right side at $\lambda=1$.

Now, as in \cite[Proposition 1.6]{RS-Poh}, integrating by parts and making a change of variables we find
\[\int_\Omega u_\lambda (-\Delta)^su\,dx=\lambda^{\frac{2s-n}{2}}\int_{\mathbb R^n}w_{\sqrt{\lambda}}\,w_{1/\sqrt{\lambda}}\,dy,\]
where
\[w(x)=(-\Delta)^{s/2}u(x)\qquad {\rm and}\qquad w_{\lambda}(x)=w(\lambda x).\]
Thus,
\begin{equation}\label{igualtatpaluego}
\int_\Omega(\nabla u\cdot x)(-\Delta)^su\, dx=\frac{2s-n}{2}\int_{\Omega}u(-\Delta)^s u\, dx+
\frac 12\left.\frac{d}{d\lambda}\right|_{\lambda=1^+}\int_{\mathbb R^n}w_{{\lambda}}w_{1/{\lambda}}\,dy.
\end{equation}
Hence, Theorem \ref{intparts} is equivalent to the identity
\begin{equation}\label{derivadaIlambda}
-\left.\frac{d}{d\lambda}\right|_{\lambda=1^+}I_\lambda=\Gamma(1+s)^2\int_{\partial\Omega}\left(\frac{u}{d^s}\right)^2(x\cdot\nu)\,d\sigma,
\end{equation}
where
\begin{equation}\label{Ilambda}
I_\lambda=\int_{\mathbb R^n}w_{{\lambda}}w_{1/{\lambda}}\,dy.
\end{equation}

Now, as in \cite{RS-Poh}, since the domain is strictly star-shaped we can write the integral $I_\lambda$ in spherical coordinates to obtain
\[\left.\frac{d}{d\lambda}\right|_{\lambda=1^+}I_\lambda=
\left.\frac{d}{d\lambda}\right|_{\lambda=1^+}\int_{\partial\Omega}(x\cdot \nu)d\sigma(x)
\int_0^\infty t^{n-1}w(\lambda tx)w\left(\frac{tx}{\lambda}\right)dt.\]

Fix now $x_0\in \partial\Omega$, and define
\[\varphi(t)=t^{\frac{n-1}{2}}w\left(t x_0\right)=t^{\frac{n-1}{2}}(-\Delta)^{s/2}u(t x_0).\]
By Proposition \ref{thlaps/2},
\[\varphi(t)=c_1\{\log^-\delta(tx_0)+c_2\chi_{[0,1]}\}v(tx_0)+h_0(t)\]
in $[0,\infty)$, where $v$ is a $C^\infty(\R^n)$ extension of $u/d^s|_\Omega$ and $h_0$ is a $C^{\alpha}([0,\infty))$ function.

Arguing as in the proof of Proposition 1.6 in \cite{RS-Poh}, we find that
\[\varphi(t)=c_1\{\log^-|t-1|+c_2\chi_{[0,1]}(t)\}v(x_0)+h(t),\]
for some $h\in C^{\alpha/2}([0,\infty))$.
Note that we write $\varphi$ in this form in order to apply Proposition \ref{propoperador}.
Let us next check the hypotheses of this proposition.

Since
\[h(t)=t^{\frac{n-1}{2}}(-\Delta)^{s/2} u \left(t x_0\right)-c_1\{\log^-|t-1|+c_2\chi_{[0,1]}(t)\}v(x_0),\]
then it follows from \eqref{numeret} in Proposition \ref{thlaps/2} that $h$ satisfies condition (ii) of Proposition \ref{propoperador}.

Let now $s_0$ be such that $s/2=k+s_0$, with $k$ integer.
Then, for $x\in \R^n\backslash(2\Omega)$, one has
\[|\partial_{i}(-\Delta)^{s/2}u(x)|\leq C|x|^{-n-s_0-1}\ \ \textrm{ and }\ \ |\partial_{ij}(-\Delta)^{s/2}u(x)|\leq C|x|^{-n-s_0-2},\]
since $(-\Delta)^k u(x)$ has support in $\overline\Omega$.
This yields $|\varphi'(t)|\leq Ct^{\frac{n-1}{2}-n-s_0-1}\leq Ct^{-2-s_0}$ and $|\varphi''(t)|\leq Ct^{\frac{n-1}{2}-n-s_0-2}\leq Ct^{-3-s_0}$ for $t>2$.
This is condition (iii) of Proposition \ref{propoperador}.

Thus, it follows from Proposition \ref{propoperador} that
\[\left.\frac{d}{d\lambda}\right|_{\lambda=1^+}\int_0^\infty \varphi(\lambda t)\varphi\left(\frac{t}{\lambda}\right)dt=\left(v(x_0)\right)^2c_1^2(\pi^2+c_2^2),\]
and therefore
\[\left.\frac{d}{d\lambda}\right|_{\lambda=1^+}\int_0^\infty t^{n-1}w(\lambda tx_0)w\left(\frac{tx_0}{\lambda}\right)dt=\left(v(x_0)\right)^2c_1^2(\pi^2+c_2^2)\]
for each $x_0\in\partial\Omega$.

Furthermore, by uniform convergence on $x_0$ of the limit defining this derivative, this leads to
\[\left.\frac{d}{d\lambda}\right|_{\lambda=1^+}I_\lambda=c_1^2(\pi^2+c_2^2)\int_{\partial\Omega}(x_0\cdot\nu)
\left(\frac{u}{d^s}(x_0)\right)^2d\sigma(x_0).\]
Here we have used that $v(x)=u/d^s$ in $\overline\Omega$.
This last equality yields
\[\int_\Omega(\nabla u\cdot x)(-\Delta)^su\, dx=\frac{2s-n}{2}\int_{\Omega}u(-\Delta)^s u\, dx+c_s\int_{\partial\Omega}(x\cdot\nu)
\left(\frac{u}{d^s}(x)\right)^2d\sigma(x),\]
where $c_s=c_1^2(\pi^2+c_2^2)$ is a constant that depend only on $s$.

Hence, to finish the proof it only remains to show that
\[c_s=\Gamma(1+s)^2.\]
But since $c_s$ depends only on $s$, the constant $c_s$ in the identity can be computed by plugging an explicit solution in it.
This was done in \cite[Remark A.4]{RS-Poh} for the explicit solution given by Lemma \ref{computation} in case $s\in(0,1)$.
In case $s>1$ the computation is completely analogous, and thus it follows that $c_s=\Gamma(1+s)^2$, as desired.
\end{proof}

\section{Case II: non-star-shaped domains}
\label{sec4}

In this section we prove Theorem \ref{intparts} for general smooth domains.
We follow Section 8 in \cite{RS-Poh}.
More precisely, we will show that the bilinear identity
\begin{equation}\label{bilinear}
\begin{split}
\int_\Omega(x\cdot\nabla &u_1)(-\Delta)^su_2\, dx+\int_\Omega(x\cdot\nabla u_2)(-\Delta)^su_1\,dx=\frac{2s-n}{2}\int_\Omega u_1(-\Delta)^su_2\, dx+\\
&+\frac{2s-n}{2}\int_{\Omega}u_2(-\Delta)^su_1\, dx-\Gamma(1+s)^2\int_{\partial\Omega}\frac{u_1}{d^{s}}\frac{u_2}{d^{s}}(x\cdot\nu)\, d\sigma.
\end{split}\end{equation}
holds for all functions $u_1$ and $u_2$ satisfying the hypotheses of Theorem \ref{intparts}.

We start with a lemma, which states that the identity \eqref{bilinear} holds whenever the two functions have disjoint support.

\begin{lem}\label{duesboles}
Let $s>0$ be any noninteger, and let $u_1$ and $u_2$ be $C^s(\R^n)$ functions with disjoint compact supports $K_1$ and $K_2$.
Then,
\[\begin{split}\int_{K_1}(x\cdot\nabla u_1)(-\Delta)^su_2\, dx&+\int_{K_2}(x\cdot\nabla u_2)(-\Delta)^su_1\,dx=\\
&=\frac{2s-n}{2}\int_{K_1}u_1(-\Delta)^su_2\, dx+\frac{2s-n}{2}\int_{K_2}u_2(-\Delta)^su_1\, dx.\end{split}\]
\end{lem}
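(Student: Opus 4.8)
The statement asserts that the bilinear Pohozaev identity \eqref{bilinear} holds with \emph{no boundary term} when the two functions have disjoint compact supports. Since $K_1\cap K_2=\emptyset$, the boundary $\partial\Omega$ plays no role; the key observation is that $(-\Delta)^s u_2$ restricted to $K_1$ is a \emph{smooth} function — indeed, for $x\in K_1$ the singular integral (or Fourier) definition of $(-\Delta)^s u_2(x)$ only sees $u_2$ through the kernel $|x-y|^{-n-2s}$ integrated against $u_2(y)$ for $y\in K_2$, which is a $C^\infty$ function of $x$ away from $K_2$. So both integrands on the left are genuine (non-singular) integrals of products of smooth, compactly supported data, and the identity should reduce to an elementary rescaling/integration-by-parts computation. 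The cleanest route is the scaling trick already used in Section \ref{sec2}: write $u_{i,\lambda}(x)=u_i(\lambda x)$ and note $\int_{K_1}(x\cdot\nabla u_1)(-\Delta)^s u_2\,dx=\left.\frac{d}{d\lambda}\right|_{\lambda=1}\int u_{1,\lambda}(-\Delta)^s u_2\,dx$, with the symmetric statement for the second term.

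First I would set $J(\lambda)=\int_{\R^n} u_{1,\lambda}(x)\,(-\Delta)^s u_2(x)\,dx+\int_{\R^n} u_{2,\lambda}(x)\,(-\Delta)^s u_1(x)\,dx$, where the integrals are really over $K_1$ and $K_2$ respectively since that is where $u_{1,\lambda}$, $u_{2,\lambda}$ are supported (for $\lambda$ near $1$). Using the self-adjointness of $(-\Delta)^s$ on Schwartz-type functions — here legitimate because after cutting off we may replace $u_i$ by smooth compactly supported functions and pass to the Fourier side, $\int u_{1,\lambda}(-\Delta)^s u_2=\int |\xi|^{2s}\widehat{u_{1,\lambda}}(\xi)\overline{\widehat{u_2}(\xi)}\,d\xi$ — together with the change of variables $\xi\mapsto\xi/\lambda$ in the first term and $\xi\mapsto\lambda\xi$ in the second, one finds that $J(\lambda)=\lambda^{\frac{2s-n}{2}}\,J(1)\cdot(\text{something symmetric})$; more precisely the standard manipulation (exactly as in the display preceding \eqref{igualtatpaluego}, applied to the pair) gives $J(\lambda)=\lambda^{\frac{2s-n}{2}}\int_{\R^n} w_{1,\sqrt\lambda}\,w_{2,1/\sqrt\lambda}\,dy+\lambda^{\frac{2s-n}{2}}\int_{\R^n} w_{2,\sqrt\lambda}\,w_{1,1/\sqrt\lambda}\,dy$ with $w_i=(-\Delta)^{s/2}u_i$.

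Next, differentiating $J(\lambda)$ at $\lambda=1$: the prefactor $\lambda^{\frac{2s-n}{2}}$ contributes $\frac{2s-n}{2}J(1)=\frac{2s-n}{2}\big(\int_{K_1}u_1(-\Delta)^su_2+\int_{K_2}u_2(-\Delta)^su_1\big)$, which produces exactly the right-hand side of the asserted identity. It remains to show the remaining term vanishes, i.e. $\left.\frac{d}{d\lambda}\right|_{\lambda=1}\int_{\R^n}\big(w_{1,\lambda}w_{2,1/\lambda}+w_{2,\lambda}w_{1,1/\lambda}\big)\,dy=0$. This is where the disjoint-support hypothesis is essential: although each $w_i=(-\Delta)^{s/2}u_i$ has a logarithmic singularity along $\partial(\mathrm{supp}\,u_i)$ (cf. Proposition \ref{thlaps/2}), the \emph{product} $w_1 w_2$ need not be better than integrable; however the symmetrized derivative can be evaluated by differentiating under the integral sign after the substitution $y\mapsto y/\lambda$ in half of each term, which turns the expression into $\int w_1(y)\big(\partial_\mu|_{\mu=1}w_2(\mu y)-\partial_\mu|_{\mu=1}w_2(\mu y)\big)$-type cancellation — concretely, replacing $\int w_{1,\lambda}w_{2,1/\lambda}$ by $\int w_1 w_{2,1/\lambda^2}$ after rescaling and similarly for the other term, the two $\lambda$-derivatives are negatives of each other. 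The main obstacle is precisely justifying this differentiation under the integral sign and the cancellation rigorously in the presence of the log-singularities of $w_i$: one needs that, since $K_1$ and $K_2$ are at positive distance, on a neighbourhood of $K_1$ the function $w_2$ is $C^\infty$ and all its derivatives decay like $|x|^{-n-2s+j}$ at infinity (because $(-\Delta)^{s/2}u_2$ away from $K_2$ is smooth with the stated decay), which makes every integrand and its $\lambda$-derivative dominated by an integrable function uniformly for $\lambda$ near $1$, so Proposition \ref{propoperador}'s vanishing mechanism (the term $\frac{d}{d\lambda}|_{\lambda=1^+}\int w_\lambda w_{1/\lambda}$ being zero for $C^1$ functions) applies to the \emph{smooth} parts and the singular parts of $w_1$ and $w_2$ live on disjoint sets so their cross terms are smooth. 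Once that domination is in place the identity follows by a routine computation, and I expect no further difficulty.
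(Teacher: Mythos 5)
Your scaling/Plancherel route is genuinely different from the paper's argument, which is far more elementary: the paper writes $s=s_0+k$, uses the pointwise commutator identity
\[
(-\Delta)^s(x\cdot\nabla u_i)=x\cdot\nabla(-\Delta)^s u_i+2s\,(-\Delta)^s u_i\quad\text{in }\R^n\setminus K_i
\]
(obtained by combining the $s_0\in(0,1)$ case from \cite{RS-Poh} with the trivial integer case $(-\Delta)^k(x\cdot\nabla w)=x\cdot\nabla(-\Delta)^kw+2k(-\Delta)^kw$), together with the duality $\int_{K_1}w_1(-\Delta)^s w_2=\int_{K_2}w_2(-\Delta)^s w_1$ for disjoint supports, and then a classical integration by parts over $K_1$. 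Everything there is a genuine, absolutely convergent integral of a compactly supported function against a smooth kernel; no $(-\Delta)^{s/2}u_i$ ever appears and no singular behavior of $w_i=(-\Delta)^{s/2}u_i$ has to be analyzed.

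Your approach, by contrast, routes through the $w_i=(-\Delta)^{s/2}u_i$ and the quantity $\frac{d}{d\lambda}\big|_{\lambda=1}\big[\int w_{1,\lambda}w_{2,1/\lambda}+\int w_{2,\lambda}w_{1,1/\lambda}\big]$, and the claimed vanishing is not adequately justified. You correctly identify differentiation under the integral sign as ``the main obstacle,'' but the resolution you offer is not right as stated. It is false that ``the singular parts of $w_1$ and $w_2$ live on disjoint sets so their cross terms are smooth'': the product $w_1w_2$ is still logarithmically singular along $\partial K_1$ (where $w_2$ is merely bounded) and along $\partial K_2$, and naive differentiation produces terms like $\int w_1\,(z\cdot\nabla w_2)\,dz$ with $\nabla w_2\sim\delta^{-1}$ near $\partial K_2$, which is \emph{not} absolutely convergent. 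Moreover, the appeal to ``Proposition~\ref{propoperador}'s vanishing mechanism'' is misplaced --- that proposition produces a \emph{nonzero} answer $A^2\pi^2+B^2$ precisely because the singularities collide, and the same scaled expression $\int w_\lambda w_{1/\lambda}$ is symmetric under $\lambda\mapsto1/\lambda$ even there, so symmetry alone does not give a vanishing (two-sided) derivative; one must first \emph{prove} differentiability at $\lambda=1$. To make your route work one would need a local change of variables in each neighborhood of $\partial K_i$ that moves the $\lambda$-dependence off the singular factor (so the log-singular function is fixed and the $\lambda$-dependent factor is smooth there), and then feed that into the symmetry $F(\lambda)=F(1/\lambda)$; this is doable but is precisely the missing content. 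There is also a secondary concern: your Plancherel step uses $w_i\in L^2$, i.e.\ $u_i\in H^s(\R^n)$, which the stated hypothesis $u_i\in C^s(\R^n)$ with compact support does not by itself guarantee; the paper's direct argument sidesteps this entirely since it never needs $(-\Delta)^{s/2}u_i$.
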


\begin{proof}
Let $s=s_0+k$, with $k\in\mathbb Z$ and $s_0\in(0,1)$.
First, it is immediate to check by induction on $k$ that, for all $w\in C^\infty(\R^n)$,
\begin{equation}\label{aa}
(-\Delta)^k(x\cdot\nabla w)=x\cdot\nabla(-\Delta)^kw+2k(-\Delta)^kw\quad \textrm{in}\ \R^n.\end{equation}
Moreover, it was proved in \cite[Lemma 5.1]{RS-Poh} that
\[(-\Delta)^{s_0}(x\cdot\nabla w)=x\cdot\nabla(-\Delta)^{s_0}w+2s(-\Delta)^{s_0}w\qquad \mbox{in}\ \  \R^n\backslash \textrm{supp}\,w\]
whenever these integrals are well defined.
Hence, we find
\begin{equation}\label{klm4}\begin{split}
(-\Delta)^s(x\cdot\nabla u_i)&=(-\Delta)^k\left\{x\cdot\nabla(-\Delta)^{s_0}u_i+2s_0(-\Delta)^{s_0}u_i\right\}\\
&=x\cdot\nabla(-\Delta)^su_i+2s(-\Delta)^su_i\qquad \mbox{in}\ \  \R^n\backslash K_i.
\end{split}\end{equation}
Note that $(-\Delta)^{s_0}u_i$ is smooth outside $K_i$, and thus we can use \eqref{aa}.

Moreover, note that, for all functions $w_1$ and $w_2$ in $L^1(\R^n)$ with disjoint compact supports $W_1$ and $W_2$, it holds the integration by parts formula
\begin{equation}\label{above}
\begin{split}
\int_{W_1}w_1(-\Delta)^sw_2&=-c_{n,s_0}\int_{W_1}\int_{W_2}w_1(x)w_2(y)(-\Delta)^k|x-y|^{-n-2s_0}dy\,dx\\
&=\int_{W_2}w_2(-\Delta)^sw_1.\end{split}
\end{equation}
Then, using \eqref{klm4} and \eqref{above}, the proof finishes as in \cite[Lemma 5.1]{RS-Poh}.
\end{proof}

The second lemma states that the bilinear identity \eqref{bilinear} holds whenever the two functions $u_1$ and $u_2$ have compact supports in a ball $B$ such that $\Omega\cap B$ is star-shaped with respect to some point $z_0$ in $\Omega\cap B$.

\begin{lem}\label{unabola} Let $\Omega$ be a bounded smooth domain, and let $B$ be a ball in $\R^n$.
Assume that there exists $z_0\in \Omega\cap B$ such that
\[(x-z_0)\cdot\nu(x)>0\qquad\mbox{for all}\ x\in\partial\Omega\cap \overline B.\]
Let $u$ be a function satisfying the hypothesis of Theorem \ref{intparts}.
Assume in addition that $(-\Delta)^su\in C^\infty(\overline\Omega)$.
Let $u_1=u\eta_1$ and $u_2=u\eta_2$, where $\eta_i\in C^\infty_c(B)$, $i=1,2$.
Then, the following identity holds
\[\int_B(x\cdot\nabla u_1)(-\Delta)^su_2\, dx+\int_B(x\cdot\nabla u_2)(-\Delta)^su_1\,dx=\frac{2s-n}{2}\int_Bu_1(-\Delta)^su_2\, dx+\]
\[+\frac{2s-n}{2}\int_{B}u_2(-\Delta)^su_1\, dx
-\Gamma(1+s)^2\int_{\partial\Omega\cap B}\frac{u_1}{d^{s}}\frac{u_2}{d^{s}}(x\cdot\nu)\, d\sigma.\]
\end{lem}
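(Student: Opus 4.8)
The plan is to reduce this ``local star-shaped'' bilinear identity to the global star-shaped case already established in Section \ref{sec2}, combined with the disjoint-support Lemma \ref{duesboles}. First I would modify the domain: since $(x-z_0)\cdot\nu(x)>0$ for all $x\in\partial\Omega\cap\overline B$, I can construct a bounded smooth domain $\widetilde\Omega$ that is strictly star-shaped with respect to $z_0$ and that agrees with $\Omega$ inside a slightly smaller ball $B'\Subset B$ containing the supports of $\eta_1$ and $\eta_2$; outside $B'$ the domain $\widetilde\Omega$ is chosen to be a smooth star-shaped perturbation whose boundary stays away from $\mathrm{supp}\,\eta_i$. After translating so that $z_0=0$, I would like to apply the star-shaped case of Theorem \ref{intparts} (more precisely, its bilinear version) on $\widetilde\Omega$ to the functions $u_1=u\eta_1$ and $u_2=u\eta_2$. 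The subtlety is that $u_1,u_2$ are not of the form ``$H^s$ solution of a Dirichlet problem on $\widetilde\Omega$ with smooth right-hand side'': they do vanish outside $\widetilde\Omega$ and $u_i/d_{\widetilde\Omega}^{\,s}=u_i/d^s=(u/d^s)\eta_i$ is $C^\infty(\overline{\widetilde\Omega})$ near $\partial\widetilde\Omega\cap B'$, but $(-\Delta)^su_i$ need not be in $L^\infty$. However, the proof of Theorem \ref{intparts} in Section \ref{sec2} only used the conclusions of Proposition \ref{thlaps/2}, i.e. the precise logarithmic expansion of $(-\Delta)^{s/2}u_i$ along $\partial\widetilde\Omega$ together with the growth bound \eqref{numeret}; so I would verify that these still hold for $u_i$ by the cutoff construction, splitting $(-\Delta)^{s/2}(u\eta_i)$ into the piece coming from $\eta_i\,(-\Delta)^{s/2}u$ (which has the right expansion by Proposition \ref{thlaps/2} applied to $u$) plus commutator/remainder terms of the form $I_{s_0}(\eta_i,\cdot)$ and lower-order contributions, all of which are $C^\alpha$ since $\eta_i$ is smooth and compactly supported inside $\Omega$ away from $\partial\widetilde\Omega$ except on $B'$ where $\widetilde\Omega=\Omega$.

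Granting that, the star-shaped bilinear identity on $\widetilde\Omega$ gives
\[
\int_{\R^n}(x\cdot\nabla u_1)(-\Delta)^su_2+\int_{\R^n}(x\cdot\nabla u_2)(-\Delta)^su_1
=\frac{2s-n}{2}\int_{\R^n}\!\bigl(u_1(-\Delta)^su_2+u_2(-\Delta)^su_1\bigr)
-\Gamma(1+s)^2\!\int_{\partial\widetilde\Omega}\frac{u_1}{d^{s}}\frac{u_2}{d^{s}}(x\cdot\nu)\,d\sigma.
\]
Since $u_i$ is supported in $B'$ and $\partial\widetilde\Omega\cap B'=\partial\Omega\cap B'$, while $u_1/d^s\,u_2/d^s=0$ on the rest of $\partial\widetilde\Omega$, the boundary integral is exactly $\int_{\partial\Omega\cap B}\frac{u_1}{d^s}\frac{u_2}{d^s}(x\cdot\nu)\,d\sigma$; and the volume integrals over $\R^n$ equal the corresponding integrals over $B$ because the integrands are supported in $B'$. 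This is precisely the claimed identity. A small point to address here is that $x\cdot\nabla u_i$ and $(-\Delta)^su_i$ are genuinely integrable against each other — this follows from $u_i\in C^s\subset C^1$ and the decay of $(-\Delta)^su_i$ at infinity (as in \eqref{first} and the argument in Section \ref{sec2}), and Lemma \ref{duesboles} is what legitimizes separating the ``near $\widetilde\Omega$'' and ``far'' contributions when needed.

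The main obstacle, as indicated, is the first step: justifying that Proposition \ref{thlaps/2} (both the expansion \eqref{**} and the estimate \eqref{numeret}) applies to $u_i=u\eta_i$ on the modified domain $\widetilde\Omega$, even though $(-\Delta)^s u_i\notin L^\infty$. The key observation making this work is that $\eta_i$ is smooth and $\mathrm{supp}\,\eta_i\Subset B'$ where $\widetilde\Omega$ and $\Omega$ coincide, so near $\partial\widetilde\Omega$ the behavior of $u_i$ is literally $\eta_i$ times the behavior of $u$, and multiplication by a fixed smooth function preserves the conormal-type expansion; away from $\partial\widetilde\Omega$ the function $u_i$ is merely $C^s$ with compact support, which is more than enough for the estimate \eqref{numeret} there. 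Once this is in place, the algebra of the reduction is routine. I would present the cutoff/extension construction carefully and then refer to Section \ref{sec2} for the computation, exactly as the paper does in the $s\in(0,1)$ case in \cite[Lemma 5.2]{RS-Poh}.
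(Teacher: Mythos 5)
Your overall plan---construct a strictly star-shaped domain $\tilde\Omega$ agreeing with $\Omega$ near $\mathrm{supp}\,\eta_i$, apply the star-shaped case on $\tilde\Omega$, and then observe that the boundary/volume integrals reduce to the desired ones---is the same reduction the paper carries out. However, there is a significant unnecessary detour in the middle, driven by a wrong premise, that leaves a real gap.

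You assert that ``$(-\Delta)^s u_i$ need not be in $L^\infty$'' and therefore propose to bypass the hypotheses of Theorem~\ref{intparts} by re-establishing the conclusions of Proposition~\ref{thlaps/2} for $u_i$ on $\tilde\Omega$ via a splitting into $\eta_i(-\Delta)^{s/2}u$ plus commutator remainders $I_{s_0}(\eta_i,\cdot)$. This is not needed, and the sketch you give of those commutator estimates is where the proposal becomes vague. The point you are missing is the ``if and only if'' in Theorem~\ref{thGrubb}: once you know that $u_i$ vanishes outside $\tilde\Omega$ and that $u_i/\tilde d^s\in C^\infty(\overline{\tilde\Omega})$ (which you do observe, since $u_i/\tilde d^s=(u/d^s)(d^s/\tilde d^s)\eta_i$ and $d/\tilde d$ is smooth on $\mathrm{supp}\,u_i$, while $u_i/\tilde d^s\equiv 0$ near the part of $\partial\tilde\Omega$ away from $\mathrm{supp}\,\eta_i$), Grubb's equivalence immediately yields $(-\Delta)^s u_i\in C^\infty(\overline{\tilde\Omega})$. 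So $u_i$ \emph{does} satisfy the hypotheses of the star-shaped case of Theorem~\ref{intparts} on $\tilde\Omega$, and the theorem can be applied directly without revisiting Proposition~\ref{thlaps/2} at all. That is exactly the paper's argument. (The paper also takes $\tilde\Omega\subset\Omega\cap B$ with $\{\tilde u>0\}\subset\tilde\Omega$ rather than a perturbation allowed to leave $\Omega$; either variant works, but containment in $\Omega$ makes the smoothness of $d/\tilde d$ on $\mathrm{supp}\,\tilde u$ transparent.)

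A minor additional point: Section~\ref{sec2} establishes only the scalar identity for a single function on a star-shaped domain, not the bilinear version. To obtain the bilinear statement for $u_1,u_2$ you need the standard polarization step---apply the scalar identity to $(\eta_1+\eta_2)u$ and to $(\eta_1-\eta_2)u$ and subtract---which the paper does explicitly and which you should not elide.
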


\begin{proof}
Let $\eta\in C^\infty_c(B)$ and $\tilde u=u\eta$.
We next show that
\begin{equation}\label{y}
\int_B(x\cdot \nabla \tilde u)(-\Delta)^{s}\tilde u\,dx=\frac{2s-n}{2}\int_B\tilde u(-\Delta)^s\tilde u\,dx-\Gamma(1+s)^2\int_{\partial\Omega\cap B}\left(\frac{\tilde u}{d^{s}}\right)^2(x\cdot\nu) d\sigma.
\end{equation}
From this, the lemma follows by applying \eqref{y} with $\tilde u$ replaced by $(\eta_1+\eta_2)u$ and by $(\eta_1-\eta_2)u$, and subtracting both identities.

We next prove \eqref{y}.
For it, we will apply the result for strictly star-shaped domains, already proven in Section \ref{sec2}.
As in \cite{RS-Poh}, there exists a smooth domain $\tilde \Omega$ satisfying
\[ \{\tilde u>0\}\subset \tilde\Omega\subset \Omega\cap B\quad \mbox{and}\quad(x-z_0)\cdot\nu(x)>0 \quad \mbox{for all }x\in \partial \tilde \Omega.\]

\begin{figure}
\begin{center}
\includegraphics[]{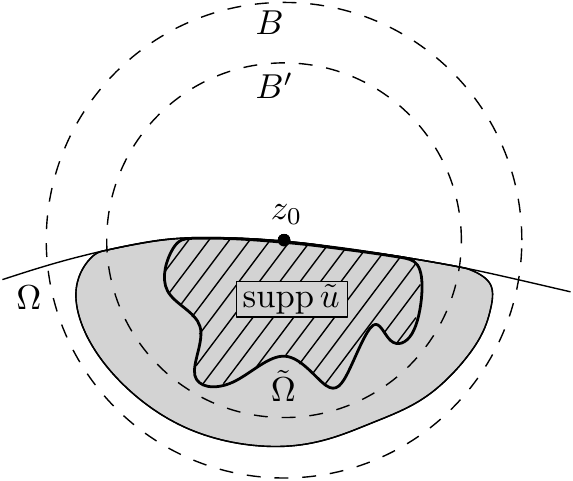}
\end{center}
\caption{\label{figura2} }
\end{figure}

Hence, it suffices to show that $\tilde u$ satisfies the hypotheses of Theorem \ref{intparts} in $\tilde\Omega$.
Let $\tilde d$ be as in Definition \ref{d} with $\Omega$ replaced by $\tilde \Omega$.
It is clear that $\tilde u\in H^s(\R^n)$ and that $\tilde u\equiv0$ in $\R^n\setminus\tilde\Omega$.
Thus, it remains only to prove that $(-\Delta)^s\tilde u\in C^\infty(\overline\Omega)$ in order to apply the results of Section \ref{sec2}.
By Theorem \ref{thGrubb}, this is equivalent to prove that $\tilde u/\tilde d\in C^\infty(\overline{\tilde\Omega})$.

But since $\textrm{supp}\,\tilde u\cap \partial\tilde\Omega\subset\partial\Omega$, then $d/\tilde d$ is smooth in $\textrm{supp}\,\tilde u$.
Thus,
\[\tilde u/\tilde d^s=\frac{u}{d^s}\frac{d^s}{\tilde d^s}\eta\in C^\infty(\overline{\tilde\Omega}),\]
where we have used that $\eta$ and $d/\tilde d$ are smooth in $\textrm{supp}\,\tilde u$, and $u/d^s\in C^\infty(\overline\Omega)$.
\end{proof}

We now give the

\begin{proof}[Proof of Theorem \ref{intparts}]
When $(-\Delta)^su\in C^\infty(\overline \Omega)$, the result follows from the two previous lemmas, by exactly the same argument as in \cite{RS-Poh}.

The result for $u\in H^s(\R^n)$ satisfying $(-\Delta)^su\in L^\infty(\Omega)$ is obtained by a density argument, as follows.
Let $g=(-\Delta)^su$, and construct a sequence of smooth functions $g_k\in C^\infty(\overline\Omega)$ satisfying $g_k\rightarrow g$ in $L^1(\Omega)$.
Let $u_k\in C^s(\R^n)$ be the solution of $(-\Delta)^s u_k=g_k$ in $\Omega$, $u_k\equiv0$ outside, and apply Theorem \ref{intparts} to these functions.
We find
\begin{equation}\label{yyy}
\int_\Omega(x\cdot \nabla u_k)(-\Delta)^{s}u_k\,dx=\frac{2s-n}{2}\int_\Omega u_k(-\Delta)^s u_k\,dx-\Gamma(1+s)^2\int_{\partial\Omega}\left(\frac{ u_k}{d^{s}}\right)^2(x\cdot\nu) d\sigma.
\end{equation}
By the estimates in Theorem \ref{thGrubb} one obtains that $u_k$, $\nabla u_k$, and $u_k/d^s$ converge uniformly to $u$, $\nabla u$, and $u/d^s$, respectively.
Thus, taking $k\rightarrow\infty$ in \eqref{yyy}, we find
\[\int_\Omega(x\cdot \nabla u)(-\Delta)^{s}u\,dx=\frac{2s-n}{2}\int_\Omega u(-\Delta)^s u\,dx-\Gamma(1+s)^2\int_{\partial\Omega}\left(\frac{ u}{d^{s}}\right)^2(x\cdot\nu) d\sigma,\]
as desired.
\end{proof}

Finally, we give the

\begin{proof}[Proof of Theorem \ref{corintparts} and Corollaries \ref{thpoh} and \ref{cor-uniquecont}]
We skip the proofs of Theorem \ref{corintparts} and Corollary \ref{thpoh} because the arguments are exactly the same as in \cite{RS-Poh}.
Let us prove next Corollary \ref{cor-uniquecont}.

First, note that any solution $\phi\in H^s(\R^n)$ to
\[\left\{ \begin{array}{rcll}
(-\Delta)^s \phi &=&\lambda\phi&\textrm{in }\Omega \\
\phi&=&0&\textrm{in }\mathbb R^n\backslash \Omega.\end{array}\right.\]
belongs to $L^\infty(\Omega)$.
Indeed, by using a standard bootstrap argument, this is an easy consequence of the results of Grubb \cite{Grubb}; see \cite{Grubb-3} where this argument is done in detail.

Hence, once we know that $\phi$ is bounded, we can apply Corollary \ref{thpoh}, to find
\[2s\int_\Omega \phi^2dx=\Gamma(1+s)^2\int_{\partial\Omega}\left(\frac{\phi}{d^s}\right)^2(x\cdot\nu)d\sigma=0.\]
It follows that, $\int_\Omega \phi^2dx=0$, and therefore $\phi\equiv0$.
\end{proof}

\appendix
\section{Proof of Lemma \ref{computation}}
\label{sec5}

In this section we prove Lemma \ref{computation}, used in Section \ref{sec2} to compute the constant $\Gamma(1+s)^2$ in the Pohozaev identity.

\begin{proof}[Proof of Lemma \ref{computation}]
Let $s=k+s_0$, with $k$ integer and $s_0\in(0,1)$.
By the explicit computations of Dyda \cite{Dyda}, we have
\begin{equation}\label{dfg}
(-\Delta)^{s_0}(1-|x|^2)_+^s=\frac{2^{2s_0}\Gamma\left(\frac{n}{2}+s_0\right)\Gamma\left(k+1+s_0\right)}{\pi^{n/2}\Gamma\left(\frac{n}{2}\right)\Gamma\left(k+1\right)}
\,_2F_1\left(\frac{n}{2}+s_0,-k;\frac{n}{2};|x|^2\right).\end{equation}
Here, $\,_2F_1$ is the hypergeometric function, defined by
\[\,_2F_1(a,b;c;z)=\sum_{m\geq0} \frac{(a)_m(b)_m}{(c)_m}\frac{z^m}{m!},\]
and $(q)_m$ is given by $(q)_m=q(q+1)\cdots(q+m-1)$.

Note that, since $-k$ is a negative integer, $(-k)_m=0$ for all $m>k$, and thus the function \eqref{dfg} is a polynomial of degree $2k$.
Moreover, its leading coefficient is
\[\,_2F_1\left(\frac{n}{2}+s_0,-k;\frac{n}{2};|x|^2\right)=\frac{\bigl(\frac{n}{2}+s_0\bigr)_k (-k)_k}{\bigl(\frac{n}{2}\bigr)_k}\frac{|x|^{2k}}{k!}+...\]
Thus,
\[\begin{split}
(-\Delta)^s(1-|x|^2)_+^s&=(-\Delta)^k(-\Delta)^{s_0}(1-|x|^2)_+^s\\
&=
\frac{2^{2s_0}\Gamma\left(\frac{n}{2}+s_0\right)\Gamma\left(k+1+s_0\right)}{\pi^{n/2}\Gamma\left(\frac{n}{2}\right)\Gamma\left(k+1\right)}
\frac{\bigl(\frac{n}{2}+s_0\bigr)_k (-k)_k}{\bigl(\frac{n}{2}\bigr)_k}
\frac{(-\Delta)^k|x|^{2k}}{k!}.\end{split}\]
Now, it is immediate to show that
\[(-\Delta)^k|x|^\beta=\beta(\beta-2)\cdots(\beta-2k+2)\times (2-n-\beta)(4-n-\beta)\cdots(2k-n-\beta)\times|x|^{\beta-2k}.\]
In particular,
\[(-\Delta)^k|x|^{2k}=2^kk!\times(-1)^k 2^k\left(\frac{n}{2}\right)_k.\]
Therefore,
\[(-\Delta)^s(1-|x|^2)_+^s=
\frac{2^{2s_0}\Gamma\left(\frac{n}{2}+s_0\right)\Gamma\left(k+1+s_0\right)}{\pi^{n/2}\Gamma\left(\frac{n}{2}\right)\Gamma\left(k+1\right)}
\frac{\bigl(\frac{n}{2}+s_0\bigr)_k (-k)_k}{\bigl(\frac{n}{2}\bigr)_k}
\frac{2^kk!\times(-1)^k 2^k\left(\frac{n}{2}\right)_k}{k!}.\]
Now, since $(-k)_k=(-1)^kk!$, and $\Gamma(1+z)=z\Gamma(z)$, we find
\[(-\Delta)^s(1-|x|^2)_+^s=
\frac{2^{2s}\Gamma\left(\frac{n}{2}+s\right)\Gamma\left(1+s\right)}{\pi^{n/2}\Gamma\left(\frac{n}{2}\right)},\]
as desired.
\end{proof}

\section*{Acknowledgements}

We thank G. Grubb for her comments and remarks on a previous version of this paper.

\end{document}